\documentclass{article}
\usepackage{graphicx} 
\usepackage{amsmath,amssymb}
\usepackage{amsthm} 

\theoremstyle{plain}    
\newtheorem{theorem}{Theorem}[section]   
\newtheorem{lemma}[theorem]{Lemma}       
\newtheorem{corollary}[theorem]{Corollary}

\title{Rigidity results for overdetermined problems of p-Laplace equation with spatially dependent data}
\author{Xi Dai\footnote{Email address:\;daixi@hnu.edu.cn}\\ School of Mathematics, Hunan University\\
Changsha 410082, Hunan Province\\
Peoples Republic of China
}
\date{}

\begin{document}

\maketitle
\begin{abstract}
Let $p>1$ and $\Omega\subset\mathbb{R}^n$ be a bounded smooth domain. We study the following overdetermined problem for the p-Laplacian:
\begin{equation*}
\left\{
\begin{array}{ll}
-\Delta_p u=f(x)&x\in\Omega,\\
u=0&x\in\partial\Omega,\\
\frac{\partial u}{\partial\nu}=g(x)&x\in\partial\Omega,
\end{array}
\right.
\end{equation*}
where $\nu$ is the inward unit normal to $\partial\Omega$ and both data may depend on position. A boundary-local version of the moving-plane argument is developed for the possibly degenerate p-Laplace operator. The advantage of this argument is that it avoids the analysis of the degenerate set of the p-Laplacian. Under some reasonable structural conditions, we prove that the solvability of the above mentioned problem forces the domain to be a ball, the source to be radial, and the Neumann datum to be a suitable constant. We also give an explicit formula for the solution. Further consequences include one directional domain symmetry,  the conclusion that full rigidity follows from reflection information in any set of directions whose linear reflections generate a dense subgroup of the orthogonal group $O(n)$, and a constant mean curvature characterization. \\
{\bf Mathematics Subject Classification (2020):} 35N25, 35B06, 35J92, 53A05. \\
{\bf Keywords:}\,Overdetermined problem, p-Laplace equation, spatially dependent data, moving plane method
\end{abstract}
\vskip 0.1in
\section{Introduction}
\par Overdetermined problems arise in many research areas, such as, in potential theory, in seeking the shape of minimizer for isoperimetric problems of domain functionals associated with partial differential equations, and also in mineral exploration. The study of overdetermined problems was initiated by J. Serrin in 1971 \cite{Se}. Serrin's rigidity theorem states that a bounded $C^{2,\alpha}$ domain $\Omega$ supports a solution to the overdetermined problem
\begin{equation*}
\left\{
    \begin{array}{ll}
        -\Delta u=1,\,&x\in \Omega  \\
         u=0,\, &x\in \partial \Omega\\
         \frac{\partial u}{\partial \nu}=c, \, &x\in \partial \Omega
    \end{array}
    \right.
\end{equation*}
must be a ball. The method serrin used to prove this theorem is the so-called moving-plane method. The moving-plane proof combines reflection, strong comparison principle, Hopf's boundary lemma, and a corner lemma. Subsequently, Weinberger \cite{We} gave an alternative proof of Serrin's rigidity theorem by a so-called p-function method. These two proof mechanisms have generated a large number of references in the literature on geometric rigidity; see, for example, \cite{BH,BHS,FZ,KP,Cr,GL,GO,GS,MP,Pr}. 

The moving plane method highly depends on facts that the solution of the problem has a definite sign and the Laplace operator is uniformly elliptic, while the p-function method essentially relies on the fact that the initial data are constants. The spatially dependent initial data change the problem in a basic way. To see this, we recall that, for any bounded smooth domain $\Omega$, the following problem
\begin{equation}\label{prob2}
\left\{
    \begin{array}{ll}
        -\Delta u=1\,&x\in \Omega,  \\
         u=0\, &x\in \partial \Omega
    \end{array}
    \right.
\end{equation}
has a unique solution $u$. Let $g(x)=\partial_{\nu} u$ on $\partial\Omega$. Then the overdetermined problem
\begin{equation}\label{prob3}
\left\{
    \begin{array}{ll}
        -\Delta u=1\,&x\in \Omega,  \\
        \frac{\partial u}{\partial\nu}=g & x\in\partial\Omega,\\
         u=0\, &x\in \partial \Omega
    \end{array}
    \right.
\end{equation}
has a unique solution, and $\Omega$ is not necessarily a ball. Therefore, one cannot expect a rigidity theorem without structural conditions on the initial data. To find those kinds of condition, we study the following model problem.
\begin{equation}\label{prob4}
\left\{    
\begin{array}{ll}        
-\Delta_p u=f(x) & x\in \Omega\\         
\frac{\partial u}{\partial \nu}=g(x) & x\in \partial\Omega\\
u(x)=0  & x\in\partial\Omega.
\end{array}    
\right.
\end{equation}
where $\Delta_p$ is a differential operator defined by $\Delta_p u=div(|\nabla u|^{p-2}\nabla u)$ and $\nu$ is the vector field of the inward unit normal to $\partial\Omega$. 

For convenience, throughout the rest of this paper, we always assume that $n\geq 2$, $p>1$, and $\Omega\subset\mathbb{R}^n$ is a bounded $C^{2,\alpha}$ domain for some $\alpha\in (0,1)$. The symbol $\nu$ denotes the inward unit normal of $\partial\Omega$. We also assume that $f\in C^\alpha(\overline{\Omega})$ and $g\in C^1(\partial\Omega)$.

Evidently, to guaranty the solvability of problem (\ref{prob4}), the divergence theorem imposes the following condition of compatibility on the initial data.
\begin{equation}\label{eq01}
\int_{\Omega}fdx=\int_{\partial\Omega}|g|^{p-2}gdS.
\end{equation}

We will give two kinds of condition that ensure the validity of Serrin type rigidity theorem for the p-Laplace operator. To express our results exactly, we introduce some concepts and assumptions.
\vskip 0.1in
{\bf Definition 1.1:} A function $u\in W_0^{1,p}(\Omega)\cup C^1(\overline{\Omega})$ is said to be a solution to the problem (\ref{prob4}) if $\frac{\partial u}{\partial\nu}=g$ pointwise on $\partial\Omega$ and satisfies the equation in the following weak sense. 
$$\int_\Omega|\nabla u|^{p-2}\nabla u\cdot\nabla\varphi dx=\int_\Omega f\varphi\,\,\,\,\,\,\,\, \forall\varphi\in C_c^\infty(\Omega).$$
\vskip 0.1in
Let $\mathbb{S}^{n-1}$ denote the set of all unit vector in $\mathbb{R}^n$ that is $\mathbb{S}^{n-1}$ is the unit sphere in $\mathbb{R}^n$, and $\Omega$ be a bounded domain in $\mathbb{R}^n$. For any $e\in\mathbb{S}^{n-1}$, and $\lambda\in\mathbb{R}$, we set
$$T_{e,\lambda}=\{x:\;x\cdot e=\lambda\},\quad\quad\mathcal{R}_{e,\lambda}(x)=x-2(x\cdot e-\lambda)e.$$
It is obvious that $T_{e,\lambda}$ is a plane orthogonal to $e$, and $\mathcal{R}_{e,\lambda}(x)$ is the reflection of $x$ about the plane $T_{e,\lambda}$. Let $\sum_{e,\lambda}=\{x\in\Omega:\;x\cdot e>\lambda\}$. We introduce the following concepts.
\vskip 0.1in
{\bf Definition 1.2:} The pair $(\sum_{e,\lambda}, T_{e,\lambda})$ is called an admissible cap of $\Omega$ if $\sum_{e,\lambda}$ is nonempty and $\mathcal{R}_{e,\lambda}(\sum_{e,\lambda})\subset\Omega$.
\vskip 0.1in
{\bf Definition 1.3:} For a direction $e\in\mathbb{S}^{n-1}$, an admissible cap $(\sum_{e,\lambda}, T_{e,\lambda})$ of $\Omega$ is said to be extremely admissible if $\overline{\mathcal{R}_{e,\lambda}(\sum_{e,\lambda})}$ is tangent to $\partial\Omega$ at some point $P$ away from $T_{e,\lambda}$, or $T_{e,\lambda}$ is orthogonal to $\partial\Omega$ at some point $Q$. 
\vskip 0.1in
We always use $(\sum_{e,\lambda_{e}}, T_{e,\lambda_{e}})$ to denote the extremely admissible cap of $\Omega$ along the direction $e$. Keeping this notation in mind, we state our assumptions on $f$ and $g$ as follows.
\vskip 0.1in
{\bf (F):}\,For any $e\in\mathbb{S}^{n-1}$, $x\in\sum_{e,\lambda_{e}}$ implies $f(x)\leq f(\mathcal{R}_{e,\lambda_{e}}(x))$.
\vskip 0.1in
{\bf (G):}\,For any $e\in\mathbb{S}^{n-1}$, $x\,\,\mbox{and}\,\,\mathcal{R}_{e,\lambda_{e}}(x)\in\partial\Omega$ implies  $g(x)\geq g(\mathcal{R}_{e,\lambda_{e}}(x))$, and $T_{e,\lambda_{e}}$ orthogonal to $\partial\Omega$ at some point $Q\in\partial\Omega$ implies $\frac{\partial g}{\partial e}(Q)\geq 0$.
\vskip 0.1in
\noindent {\bf Remark 1.1:}\,Typical examples for $g(x)$ satisfying (G) are $g\equiv constant$ and $g(x)=C(H)$ with $C$ increasing and $H$ being the mean curvature of $\partial\Omega$. Typical examples for $f(x)$ obeying (F) are $f\equiv constant$ and $f(x)=f(|x-x_0|)$ for some $x_0\in\Omega$. If $\Omega$ is a ball with center $x_0$, then $f(x)$ satisfying the Assumption (F) implies that $f(x)$ is radially symmetric, and $g(x)$ satisfying the Assumption (G) implies that $g(x)$ is a constant. 
\vskip 0.1in
At this stage, our first kind of Serrin type rigidity result can be stated as:
\vskip 0.1in
\begin{theorem}\label{re1}
Assume that $f(x)\geq 0$, $f\not\equiv 0$, and that assumptions (F)-(G) hold. The problem (\ref{prob4}) admits a solution if and only if there are $x_0\in\mathbb{R}^n$, $R>0$, and a function $F\in C^{\alpha}[0, R]$ such that
$$\Omega=B_R(x_0),\quad\quad f(x)=F(|x-x_0|),\quad\quad g\equiv c,$$
where the constant $c$ is necessarily given by
$$c=(R^{1-n}\int_0^R s^{n-1}F(s)ds)^{\frac{1}{p-1}}.$$
In this case, the unique weak solution to the problem (\ref{prob4}) can be reconstructed by
$$u(x)=\int_{|x-x_0|}^R (t^{1-n}\int_0^t s^{n-1}F(s)ds)^{\frac{1}{p-1}}dt.$$
\end{theorem}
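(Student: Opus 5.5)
The proof splits into two directions. The "if" direction is a direct computation, and the "only if" direction runs moving planes in every direction.

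\emph{If.} Suppose $\Omega=B_R(x_0)$, $f=F(|x-x_0|)$ and $g\equiv c$. Set $r=|x-x_0|$ and take $u(x)=\int_r^R \phi(t)\,dt$ with $\phi(t)=(t^{1-n}\int_0^t s^{n-1}F)^{1/(p-1)}\ge 0$. Then $|\nabla u|^{p-2}\nabla u=-\phi(r)^{p-1}\frac{x-x_0}{r}$, and $-\Delta_p u=r^{1-n}(r^{n-1}\phi^{p-1})'=F(r)$. Since $\phi(t)=O(t^{1/(p-1)})$ near $0$, we get $u\in C^1(\overline\Omega)$, and the weak formulation holds across the origin: a small ball removed around $x_0$ contributes a vanishing flux. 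The identity $\partial_\nu u=\phi(R)$ gives the value of $c$, and this is consistent with (\ref{eq01}). For uniqueness, $-\Delta_p$ is strictly monotone from $W^{1,p}_0$ to its dual, so the Dirichlet problem has at most one weak solution.

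\emph{Only if.} Let $u$ be a solution. Since $f\ge0$ and $f\not\equiv0$, the weak comparison principle gives $u> 0$ in $\Omega$. Fix $e\in\mathbb S^{n-1}$ and let $\lambda_e$ be the extremal position. For $\lambda\in[\lambda_e,\max_{\overline\Omega} x\cdot e)$, set $u_\lambda(x)=u(\mathcal R_{e,\lambda}x)$ and $w_\lambda=u_\lambda-u$ on $\Sigma_{e,\lambda}$. Assumption (F), used only at $\lambda_e$, gives $-\Delta_p u_{\lambda_e}\ge -\Delta_p u$ on the cap. The goal is $w_{\lambda_e}\ge0$, followed by $w_{\lambda_e}\equiv0$.

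To avoid analysing the critical set $\{\nabla u=0\}$, I would work locally near $\partial\Omega$, which is the boundary-local moving plane. By the Hopf lemma for the $p$-Laplacian (Vázquez), $\partial_\nu u>0$ on $\partial\Omega$, so $|\nabla u|\ge\delta>0$ in a collar $\Omega_\varepsilon$. In the collar the operator is uniformly elliptic and the linearized difference equation for $w$ is uniformly elliptic, which makes the classical comparison principle, Hopf lemma and Serrin's corner lemma available there. For interior points I would use only the weak comparison principle for $-\Delta_p$ with monotone right-hand sides: $w^-$ is a legitimate test function on $\Sigma_{e,\lambda}$ because $w\ge0$ on $\partial\Sigma_{e,\lambda}$, and the monotonicity of $\xi\mapsto|\xi|^{p-2}\xi$ then forces $w^-=0$. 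With this, starting from $\lambda$ near the maximum and decreasing to $\lambda_e$, I would show $w_\lambda\ge0$. The sign of $f$ is needed here, and at general $\lambda$ the comparison of $f$ is not given by (F). So I would instead run the argument only at $\lambda=\lambda_e$, using the weak comparison principle on the whole cap, where $w\ge0$ on the boundary of the cap is trivial: $u_{\lambda_e}>0$ on the reflected part of $\partial\Omega$ and $w=0$ on $T_{e,\lambda_e}$.

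Next I derive a contradiction unless $w\equiv0$. In case (i), the reflected cap is tangent at $P$. Then $w(P)=0$, and since $\partial_\nu u(P)=g(P)$ while $\partial_\nu u_{\lambda_e}$ at $P$ equals $g(\mathcal R P)$, assumption (G) gives $\partial_\nu w(P)\le0$. Near $P$ we are in the collar, so the Hopf lemma forces $w\equiv0$ in the connected component of the collar containing $P$. In case (ii), the plane is orthogonal at $Q$. Here $w$ and $\nabla w$ vanish at $Q$, second derivatives are computed from $u=0$ and $\partial_\nu u=g$, and the condition $\partial_e g(Q)\ge0$ in (G) makes all first and second derivatives of $w$ in directions entering the cap have the wrong sign. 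This contradicts Serrin's corner lemma, again applied in the uniformly elliptic collar, unless $w\equiv0$ near $Q$.

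The main obstacle is propagating $w\equiv0$ from a neighbourhood of $P$ or $Q$ to the whole cap without a strong comparison principle for the degenerate operator. I would propagate it first along the collar, which is connected near $\partial\Sigma$, obtaining $w\equiv0$ on a neighbourhood of $\partial\Sigma_{e,\lambda_e}\setminus T_{e,\lambda_e}$. From this, $\mathcal R_{e,\lambda_e}(\partial\Sigma\cap\partial\Omega)\subset\partial\Omega$: if $w=0$ near the boundary then $u_{\lambda_e}=0$ there, and $u>0$ inside. Hence $\Omega$ is symmetric about $T_{e,\lambda_e}$. This uses only boundary information, which is exactly the advantage claimed in the abstract. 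Since $e$ is arbitrary, $\Omega$ is symmetric with respect to a hyperplane orthogonal to every direction. All these planes pass through the centroid, so $\Omega$ is invariant under the group generated by the corresponding reflections, which is $O(n)$. Therefore $\Omega$ is a ball $B_R(x_0)$. Remark 1.1 then gives that $f$ is radial and $g$ is constant. Finally, the uniqueness from the "if" direction identifies $u$ with the formula in the statement, and (\ref{eq01}) gives the value of $c$.
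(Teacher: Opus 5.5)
Your strategy is the paper's. You use (F) only at the extremal position $\lambda_e$ and get $w=u\circ\mathcal{R}_{e,\lambda_e}-u\ge 0$ on the whole cap from the weak comparison principle. You then argue only near $\partial\Omega$, where $|\nabla u|$ is bounded below: Hopf's lemma with the first part of (G) at a tangency point, and Serrin's corner lemma with $\partial_e g(Q)\ge 0$ at an orthogonal point. The converse and the computation of $c$ are as in the paper. Your ``local vanishing, then propagation'' is the contrapositive of the paper's choice of a tangency point $P$ that is a limit of non-touching points; both amount to showing that the set of touching points is relatively open. As written, however, three steps do not hold.

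\textbf{Uniform ellipticity and connectedness of the collar.} The difference operator $Lw=\Delta_p u_{\lambda_e}-\Delta_p u$ is not uniformly elliptic on the whole collar. Its coefficients involve $\nabla u$ and $D^2u$ at $\mathcal{R}_{e,\lambda_e}x$, which may lie deep in $\Omega$ near critical points of $u$. This is why the paper shrinks $r_0$ until both $\mathcal{V}_{r_0}(Q)$ and its reflection lie in $\mathcal{N}(\partial\Omega)$. Propagation can be rescued: at a limit point $z$ of the set where $w$ vanishes locally, $\nabla w(z)=0$ gives $|\nabla u(\mathcal{R}_{e,\lambda_e}z)|=|\nabla u(z)|>0$, so the strong maximum principle applies near $z$. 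But your claim that the collar is connected near $\partial\Sigma_{e,\lambda_e}$ is unjustified, since the cap may have several components (for instance, two fingers pointing in direction $e$). You only get that the touching set is a union of components of $\partial\Omega\cap\{x\cdot e>\lambda_e\}$. You then need an extra step, for example: if the touching set contains $\partial\Sigma'\setminus T_{e,\lambda_e}$ for one component $\Sigma'$ of the cap, then $\Sigma'\cup\mathcal{R}_{e,\lambda_e}(\Sigma')\cup(\overline{\Sigma'}\cap T_{e,\lambda_e}\cap\Omega)$ is open and closed in $\Omega$, hence equals $\Omega$. The paper's choice of $P$ tacitly relies on the same adjacency.

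\textbf{Case (ii).} In the paper's frame ($Q=0$, $e=e_1$, $\nu(Q)=e_n$), the mixed derivatives $w_{1l}(0)=2g(0)\phi_{1l}(0)$ for $2\le l\le n-1$ are not fixed by $u=0$ and $\partial_\nu u=g$. You need $\phi_{1l}(0)=0$, which follows from $\mathcal{R}_{e,\lambda_e}(\Sigma_{e,\lambda_e})\subset\Omega$ by a Taylor expansion of $\phi$, as in the paper. Without it, $\partial_{ss}w(0)$ need not be $\le 0$ for every entering direction $s$, so your ``wrong sign'' claim fails. You also must verify the structure condition (C2) of the corner lemma. The paper does this through the symmetrized linearization and $|\nabla u|=|\nabla v|$ on $T_e$.

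\textbf{Annulus.} Invariance under $O(n)$ gives a ball \emph{or a concentric annulus}. You must exclude the annulus using that your symmetry plane is the extremal one. For $\{a<|x-x_0|<R\}$ that plane lies at distance $(R+a)/2$ from $x_0$ and is not a plane of symmetry, as in Section 4 of the paper.
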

\vskip 0.1in
Our second kind of Serrin type rigidity result is based on a mean-curvature characterization of a ball, and can be stated as:
\vskip 0.1in
\begin{theorem}\label{re2}
For any non-trivial $f(x)\in C^{\alpha}(\overline{\Omega})$, the overdetermined problem

\begin{equation}\label{prob5}
\left\{
    \begin{array}{ll}
        -\Delta_p u(x)=f(x) &x\in \Omega  \\
         u(x)=0,\quad\quad |\nabla u(x)|\neq 0 & x\in \partial \Omega\\
        |\nabla u|^{p-2}(\beta\frac{\partial u}{\partial \nu}-\frac{\partial^{2}u}{\partial \nu^{2}})=\frac{f(x)}{p-1}, \, &x\in \partial \Omega 
    \end{array}
    \right.
\end{equation}
admits a solution for some constant $\beta$ if and only if $\Omega$ is a ball.
\end{theorem}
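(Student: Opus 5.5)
Since $u=0$ on $\partial\Omega$ and $\nabla u\neq 0$ there, we have $\nabla u=(\partial_\nu u)\,\nu$ on $\partial\Omega$. We will decompose the $p$-Laplacian on the boundary into normal and tangential parts. Away from critical points,
$$\Delta_p u=|\nabla u|^{p-2}\Big(\Delta u+(p-2)\frac{\langle D^2u\,\nabla u,\nabla u\rangle}{|\nabla u|^2}\Big).$$
On $\partial\Omega$ the standard identity holds:
$$\Delta u=\frac{\partial^2 u}{\partial\nu^2}-(n-1)H\frac{\partial u}{\partial\nu},$$
where $H$ is the mean curvature of $\partial\Omega$ with respect to the inward normal. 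The sign convention must be fixed so that $H>0$ on spheres. Also $\langle D^2u\,\nu,\nu\rangle=\partial^2_{\nu}u$. Writing $u_\nu=\partial u/\partial\nu$, these give on $\partial\Omega$
$$-\Delta_p u=|\nabla u|^{p-2}\big((n-1)H u_\nu-(p-1)u_{\nu\nu}\big)=f.$$
The boundary condition in (\ref{prob5}) says
$$|\nabla u|^{p-2}(p-1)\big(\beta u_\nu-u_{\nu\nu}\big)=f.$$
Subtracting, we get
$$|\nabla u|^{p-2}u_\nu\big((n-1)H-(p-1)\beta\big)=0 .$$
Since $u_\nu\neq0$, it follows that $H\equiv (p-1)\beta/(n-1)$ is constant on $\partial\Omega$.

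The step needing care is the justification of these pointwise computations. We need $u\in C^2$ up to $\partial\Omega$ near the boundary. Because $\nabla u\neq0$ on $\partial\Omega$, the equation is uniformly elliptic in a neighbourhood of $\partial\Omega$. Standard $C^{1,\gamma}$ regularity (Lieberman) plus Schauder theory, applied with $f\in C^\alpha$ and $\partial\Omega\in C^{2,\alpha}$, then gives $u\in C^{2,\alpha}$ in a boundary collar. This makes both the boundary identity and the second normal derivative in (\ref{prob5}) meaningful. I regard this regularity step as the only real obstacle. For the identity for $\Delta u$ itself, I would differentiate $u=0$ twice along $\partial\Omega$ in principal-curvature coordinates.

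Once $H$ is constant, Alexandrov's soap bubble theorem shows that the compact embedded $C^2$ hypersurface $\partial\Omega$ is a sphere, so $\Omega$ is a ball. The expected "constant mean curvature characterization'' in the paper could also be invoked via the boundary-local moving plane developed earlier. Since $\Omega$ is bounded, $H$ is positive at a farthest point from the origin, so $\beta>0$ automatically.

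For the converse, let $\Omega=B_R(x_0)$ and take any $f$. We need to produce a solution with $\nabla u\neq0$ on the boundary. The solution of the Dirichlet problem need not be radial and need not have a nonvanishing gradient, so the "if'' direction must be read with a suitable $f$. I would produce a solution by choosing $f$ radial and positive as in Theorem \ref{re1}, where $u$ is the explicit radial solution. Then on $|x-x_0|=R$ one has $u_{\nu}=c>0$, and the radial ODE gives
$$(p-1)c^{p-2}u_{\nu\nu}=(n-1)c^{p-1}/R-F(R).$$
This is exactly the condition in (\ref{prob5}) with $\beta=(n-1)/((p-1)R)$, which is consistent with the computation above for general solutions. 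I would state the converse for such data and note that the same $\beta$ is forced in general.
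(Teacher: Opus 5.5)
Your ``only if'' argument is the paper's. The boundary identity of Lemma~\ref{le5}, $|\nabla u|^{p-2}\big((n-1)Hu_\nu-(p-1)u_{\nu\nu}\big)=f$, needs $u\in C^2$ in a boundary collar, which you justify exactly as the paper does. Subtracting it from the third condition gives $H\equiv (p-1)\beta/(n-1)$, and Alexandrov's theorem finishes.

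You depart from the paper only in the converse, and there your caution is justified. The paper just applies Lemma~\ref{le5} on $B_R(x_0)$ with $H=1/R$ to get $\beta=\frac{n-1}{(p-1)R}$. That tacitly assumes the Dirichlet solution has $\nabla u\neq 0$ on $\partial B_R(x_0)$, and this can fail, so the ``if'' direction is false for some nontrivial $f$. For instance, when $p=2$ take $u=(R^2-|x-x_0|^2)\psi$ with $\psi\not\equiv 0$ smooth and $\psi(x^*)=0$ at some $x^*\in\partial B_R(x_0)$, and set $f=-\Delta u$. By uniqueness, $u$ is the only solution of the Dirichlet problem, and $\nabla u(x^*)=0$, so no $\beta$ works even though $\Omega$ is a ball.

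However, restricting to radial $f$ and using the explicit radial solution is more than you need. Your own boundary identity shows that on a ball \emph{every} Dirichlet solution with nonvanishing boundary gradient satisfies the third condition with $\beta=\frac{n-1}{(p-1)R}$, radial or not. So the converse holds exactly for those $f$ whose Dirichlet solution on $B_R(x_0)$ has $\nabla u\neq 0$ on the boundary. By Lemma~\ref{le1} and Lemma~\ref{le6}, this includes every $0\le f\not\equiv 0$. Your radial ODE computation is then just a consistency check on the value of $\beta$, not the mechanism of proof.
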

\vskip 0.1in
If $f(x)\geq 0$ and $f(x)\not\equiv 0$, then, by strong maximum principle and Hopf's boundary lemma, any solution $u$ to the problem (\ref{prob5}) satisfies $|\nabla u(x)|\neq 0$ pointwise on $\partial\Omega$ automatically. Therefore, we have
\vskip 0.1in
\begin{corollary}\label{re3}
For any nonnegative non-trivial $f(x)\in C^{\alpha}(\overline{\Omega})$, the overdetermined problem

\begin{equation}\label{prob6}
\left\{
    \begin{array}{ll}
        -\Delta_p u(x)=f(x) &x\in \Omega  \\
         u(x)=0& x\in \partial \Omega\\
        |\nabla u|^{p-2}(\beta\frac{\partial u}{\partial \nu}-\frac{\partial^{2}u}{\partial \nu^{2}})=\frac{f(x)}{p-1}, \, &x\in \partial \Omega 
    \end{array}
    \right.
\end{equation}
admits a solution for some constant $\beta$ if and only if $\Omega$ is a ball.
\end{corollary}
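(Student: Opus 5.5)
The corollary follows from Theorem \ref{re2} once we check that, for nonnegative nontrivial $f$, every solution of (\ref{prob6}) automatically satisfies $|\nabla u|\neq 0$ on $\partial\Omega$. Then (\ref{prob6}) and (\ref{prob5}) have the same solutions, and the equivalence with $\Omega$ being a ball is inherited directly.

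\emph{Step 1: nondegeneracy.} Let $u\in C^1(\overline{\Omega})$ be a weak solution of $-\Delta_p u=f\geq 0$ with $u=0$ on $\partial\Omega$. Testing the weak formulation with $u^-$ (by density, since $u\in W^{1,p}_0(\Omega)$) gives $\int_\Omega|\nabla u^-|^p\,dx\le 0$, so $u\geq 0$. Because $f\not\equiv 0$, $u$ is not identically zero. $u$ is a nonnegative $p$-supersolution, so V\'azquez's strong maximum principle gives $u>0$ in $\Omega$. Since $\Omega$ is $C^{2,\alpha}$, it satisfies the interior sphere condition. V\'azquez's Hopf boundary lemma for $\Delta_p$ then gives $\partial u/\partial\nu>0$ at every boundary point, hence $|\nabla u|\neq 0$ on $\partial\Omega$.

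\emph{Step 2: reduction.} By Step 1, a solution of (\ref{prob6}) with some constant $\beta$ is a solution of (\ref{prob5}). Theorem \ref{re2} then shows that $\Omega$ is a ball.

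\emph{Step 3: converse.} If $\Omega$ is a ball, Theorem \ref{re2} provides a solution of (\ref{prob5}) for some $\beta$. This function satisfies every condition in (\ref{prob6}), since (\ref{prob6}) only drops the requirement $|\nabla u|\neq0$.

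\emph{Main obstacle.} The only delicate point is applying the strong maximum principle and Hopf lemma to the degenerate operator $\Delta_p$ with a variable, merely nonnegative right-hand side. I would invoke V\'azquez's versions, which need only $-\Delta_p u\geq 0$, $u\geq 0$ and the interior sphere condition. I would also note that the second-order boundary condition presupposes enough regularity of $u$ near $\partial\Omega$. Nondegeneracy of the gradient on $\partial\Omega$ makes the equation uniformly elliptic there, so $u$ is $C^{2,\alpha}$ up to the boundary and the condition is meaningful.
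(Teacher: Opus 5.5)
Your proposal is correct and follows essentially the same route as the paper: the strong maximum principle and Hopf's lemma for $\Delta_p$ give $u>0$ in $\Omega$ and $\partial_\nu u>0$ on $\partial\Omega$, so $|\nabla u|\neq 0$ on $\partial\Omega$ holds automatically and the statement reduces to Theorem 1.2 (restated as Theorem 2.3 in Section 2). The differences are cosmetic: you obtain $u\geq 0$ by testing with $u^-$, you cite V\'azquez's versions of the maximum principle and boundary lemma instead of Pucci--Serrin and Sakaguchi, and you spell out the converse direction explicitly.
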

\vskip 0.1in
Overdetermined problems involving $\Delta_p$ have been studied extensively in recent years. \cite{DP} studied an overdetermined problem involving $\Delta_p$ with $1<p<2$ by the moving-plane method. \cite{FGK} investigated an overdetermined problem involving a general operator with $\Delta_p$ as a special case by geometric and P-function approaches. Recently, \cite{CWZ} studied an overdetermined problem for p-Laplace equation by the continuous Steiner symmetrization method. Mention some but few. It is worth pointing out that most of the overdetermined problems involving $\Delta_p$ that have been studied so far are assumed to have spatially independent initial data. It seems that all available methods, possibly with the exception of the moving-plane method, cease to work when $f$ and $g$ are position dependent. However, the moving-plane method also cannot be used trivially in the study of degenerate overdetermined problem due to the lack of strong comparison principle. In this paper, we develop a boundary-local version of the moving-plane argument for the possibly degenerate p-Laplace operator. This boundary-local version of moving-plane argument combines reflection, strong maximum principle, weak comparison principle, strong comparison principle near the boundary, Hopf's boundary lemma, and a local version of the corner lemma. Unlike the traditional moving-plane argument used in the study of overdetermined problem for degenerate operator, our boundary-local version argument need not pay any special attention to 
the degenerate set of the differential operator. 

The rest of this paper is arranged as the following: we collect or prove some preliminary lemmas in section 2, and give a proof of our main theorem in section 3. Further consequences and remarks are included in section 4.

For simplicity, we always use the following notation in the rest of this paper.
\vskip 0.1in
\begin{itemize}
\item{$\partial_i=\frac{\partial}{\partial x_i},\quad\nabla=(\partial_1,\partial_2,\cdots,\partial_n),\quad\partial_{ij}=\frac{\partial^2}{\partial x_i\partial x_j}.$}\\
\item{For any function $u$, $u_i=\partial_i u,\quad\nabla u=(u_1,u_2,\cdots,u_n),\quad u_{ij}=\partial_{ij} u.$}\\
\item{Einstein's summation convention, which means that repeated indices indicate summation from $1$ to $n$. For example: $a_{ij}u_{ij}=\sum\limits_{i=1}^n\sum\limits_{j=1}^na_{ij}u_{ij}$, $a_{ij}\xi_i\xi_j=\sum\limits_{i=1}^n\sum\limits_{j=1}^n a_{ij}\xi_i\xi_j$, and $b_ku_k=\sum\limits_{k=1}^nb_ku_k.$}
\end{itemize}

\vskip 0.1in
\section{Preliminary Lemmas}
In this section, we will collect and prove some lemmas needed in the proof of Theorem \ref{re1}.
\vskip 0.1in
\begin{lemma}\label{le1} (strong maximum principle) \cite{PS}:\, If $u\in C^1(\Omega)$ satisfies the following inequalities
\begin{equation}\label{eq1}
\left\{
\begin{array}{ll}
-\Delta_p u(x)\geq 0 & x\in\Omega\\
u(x)\geq 0 & x\in\partial\Omega,
\end{array}
\right.
\end{equation}
then $u(x)\geq 0$ for any $x\in\Omega$, and $u(x_0)=0$ for some $x_0\in\Omega$ implies that $u(x)\equiv 0$ in $\Omega$
\end{lemma}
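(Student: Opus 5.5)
The statement is Lemma \ref{le1}, the strong maximum principle for $p$-superharmonic functions, cited from \cite{PS}. The plan is to prove it in two independent parts: a weak minimum principle by a weak-formulation argument, and then the strong part by a Hopf/V\'azquez-type barrier argument. I read the inequality $-\Delta_p u\ge 0$ in the weak sense of Definition 1.1, i.e. $\int_\Omega|\nabla u|^{p-2}\nabla u\cdot\nabla\varphi\,dx\ge 0$ for all nonnegative test functions. The boundary condition is understood as $\liminf u\ge 0$ at $\partial\Omega$; for the paper's use one has $u\in C^1(\overline\Omega)$.

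\emph{Weak part.} I will test the inequality with $\varphi=u^-=\max(-u,0)$. This is admissible (by approximation) because $u\ge0$ on $\partial\Omega$, so $u^-\in W^{1,p}_0(\Omega)$. The test gives
$$0\le\int_\Omega|\nabla u|^{p-2}\nabla u\cdot\nabla u^-\,dx=-\int_{\{u<0\}}|\nabla u|^p\,dx .$$
Hence $\nabla u^-=0$ a.e. Since $u^-$ vanishes on the boundary, $u^-\equiv0$, that is $u\ge0$ in $\Omega$.

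\emph{Strong part.} Suppose $u(x_0)=0$ for some $x_0\in\Omega$ but $u\not\equiv0$. Then the set $\{u>0\}$ is open and nonempty, and $Z=\{u=0\}$ is closed in $\Omega$ and nonempty. Since $\Omega$ is connected, I can pick a point of $\{u>0\}$ closer to $Z$ than to $\partial\Omega$. Enlarging a ball around it until it first touches $Z$ gives a ball $B=B_r(y)\subset\{u>0\}$, compactly contained in $\Omega$, with a point $z\in\partial B\cap Z$.

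On the annulus $A=B_r(y)\setminus\overline{B_{r/2}(y)}$ I will use the radial barrier
$$v(x)=\varepsilon\bigl(e^{-k|x-y|^2}-e^{-kr^2}\bigr).$$
A direct computation of $\Delta_p$ on radial functions shows $-\Delta_p v\le0$ on $A$ once $k$ is large. The key point is that $|\nabla v|\neq0$ on $A$, so there is no degeneracy there. Next choose $\varepsilon$ so small that $v\le u$ on $\partial B_{r/2}$, which is possible because $\min_{\partial B_{r/2}}u>0$. On $\partial B_r$ we have $v=0\le u$. The weak comparison principle for $\Delta_p$ is proved by the same test-function argument as the weak part, with $\varphi=(v-u)^+$ and the monotonicity of $\xi\mapsto|\xi|^{p-2}\xi$. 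It gives $v\le u$ on $A$.

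Now $z$ is an interior minimum of $u\in C^1(\Omega)$, so $\nabla u(z)=0$. On the other hand, $u-v\ge0$ on $A$ with $u(z)-v(z)=0$, so the inward radial derivative satisfies $\partial_{-\hat n}u(z)\ge \partial_{-\hat n}v(z)=2\varepsilon kre^{-kr^2}>0$, where $\hat n$ is the outer normal of $B$ at $z$. This contradicts $\nabla u(z)=0$, so $u\equiv0$.

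\emph{Main obstacle.} The delicate step is verifying the sign of $-\Delta_p v$ for the exponential barrier uniformly in $p>1$. Writing $v=\phi(\rho)$ with $\rho=|x-y|$, one has
$$\Delta_p v=|\phi'|^{p-2}\Bigl((p-1)\phi''+\tfrac{n-1}{\rho}\phi'\Bigr).$$
Here $\phi'<0$ and $\phi''=\varepsilon e^{-k\rho^2}(4k^2\rho^2-2k)$. On $A$ we have $\rho\ge r/2$, so $(p-1)4k^2\rho^2$ dominates $2k(p-1)+2k(n-1)$ once $k\gtrsim n/((p-1)r^2)$. This yields $\Delta_p v\ge0$ on $A$. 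The remaining technical issue is justifying the weak comparison on $A$ for merely $C^1$ functions, which I handle by density.
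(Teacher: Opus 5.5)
Your proof is correct. The paper gives no argument for Lemma~\ref{le1}; it only quotes \cite{PS}. What you wrote is the classical V\'azquez barrier argument, which is essentially the mechanism behind the cited Pucci--Serrin result. It runs as follows: a weak minimum principle by testing with $u^-$, then a ball in $\{u>0\}$ touching the zero set from inside, a radial subsolution on an annulus, and a contradiction with $\nabla u(z)=0$ at an interior minimum.

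The difference from \cite{PS} is one of scope. They treat general operators $\mathrm{div}(A(|\nabla u|)\nabla u)-f(u)$. There the radial comparison function must come from an ODE, and it exists precisely under the V\'azquez integral condition. For the pure $p$-Laplacian with no zero-order term, your explicit exponential barrier suffices, so the proof is self-contained and elementary.

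Your version also shows why the strong maximum principle survives the degeneracy that destroys the strong comparison principle (noted in the paper after Lemma~\ref{le3}). The barrier satisfies $|\nabla v|\neq 0$ on the closed annulus, so the degenerate set never enters. Weak comparison needs only monotonicity of $\xi\mapsto|\xi|^{p-2}\xi$. At the touching point, the only property of $u$ used is $\nabla u(z)=0$.

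Two small points, neither affecting validity:

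\begin{enumerate}
\item \textbf{The threshold for $k$.} The exact computation gives $(p-1)\phi''+\frac{n-1}{\rho}\phi' = 2k\varepsilon e^{-k\rho^2}\bigl(2(p-1)k\rho^2-(p+n-2)\bigr)$. On $\rho\ge r/2$ it therefore suffices to take $k>2(p+n-2)/((p-1)r^2)$. The $-2k(p-1)$ term forces $k\gtrsim r^{-2}$ independently of $p$. Your stated threshold $n/((p-1)r^2)$ is thus only right up to constants for bounded $p$. This is harmless, since you only need $k$ large.
\item \textbf{Interior regularity only.} If $u$ is merely in $C^1(\Omega)$ with $\liminf u\ge 0$ at $\partial\Omega$, then $u^-$ need not lie in $W^{1,p}_0(\Omega)$. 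Test instead with $(u+\epsilon)^-$, which is Lipschitz with compact support in $\Omega$, and let $\epsilon\to 0$.
\end{enumerate}
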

\vskip 0.1in
\begin{lemma}\label{le2} (weak comparison principle) \cite{SC}:\,Let $u,\, v\in C^1(\Omega)$ satisfy 
\begin{equation}\label{eq2}
\left\{
\begin{array}{ll}
-\Delta_p u(x)\geq -\Delta_p v(x) & x\in\Omega\\
u(x)\geq v(x) & x\in\partial\Omega.
\end{array}
\right.
\end{equation}
Then $u(x)\geq v(x)$ for any $x\in\Omega$.
\end{lemma}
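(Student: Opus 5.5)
The weak comparison principle is cited from \cite{SC}. A self-contained proof would follow the standard monotonicity argument, with the hypotheses read as $u,v\in C^1(\overline{\Omega})$ (or $W^{1,p}$) and $-\Delta_p u\ge -\Delta_p v$ in the weak sense. The plan is to test the difference of the two weak formulations against the negative part of $u-v$ and use strict monotonicity of the vector field $a(\xi)=|\xi|^{p-2}\xi$.

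Concretely, set $w=(v-u)^+$. Since $u\ge v$ on $\partial\Omega$ and $u,v$ are continuous up to the boundary, $w\in W^{1,p}_0(\Omega)$. Moreover $w\ge 0$, so by density it is an admissible nonnegative test function in the weak inequality
$$\int_\Omega\big(|\nabla u|^{p-2}\nabla u-|\nabla v|^{p-2}\nabla v\big)\cdot\nabla\varphi\,dx\ge 0 \qquad \text{for all } \varphi\ge 0,\ \varphi\in W^{1,p}_0(\Omega).$$
Because $\nabla w=\nabla(v-u)$ on $\{v>u\}$ and $\nabla w=0$ a.e.\ elsewhere, substituting $\varphi=w$ gives
$$\int_{\{v>u\}}\big(|\nabla v|^{p-2}\nabla v-|\nabla u|^{p-2}\nabla u\big)\cdot(\nabla v-\nabla u)\,dx\le 0 .$$

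The key algebraic fact is the strict monotonicity inequality
$$\big(|\xi|^{p-2}\xi-|\eta|^{p-2}\eta\big)\cdot(\xi-\eta)>0 \qquad \text{for } \xi\neq\eta .$$
It follows from the convexity of $\xi\mapsto|\xi|^p/p$, whose gradient is $a$. More quantitatively, it is bounded below by $c|\xi-\eta|^p$ when $p\ge2$ and by $c|\xi-\eta|^2(|\xi|+|\eta|)^{p-2}$ when $1<p<2$. Since the integrand above is therefore nonnegative and its integral is $\le 0$, we get $\nabla u=\nabla v$ a.e.\ on $\{v>u\}$. Hence $\nabla w=0$ a.e.\ in $\Omega$. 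Because $w\in W^{1,p}_0(\Omega)$, the Poincaré inequality gives $w\equiv0$, that is, $u\ge v$ in $\Omega$.

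The only delicate step is justifying $w$ as a test function. This needs the boundary inequality to hold in the trace sense, which continuity up to $\overline\Omega$ ensures, together with density of $C_c^\infty$ in $W^{1,p}_0$ for nonnegative functions. Nonnegative $W^{1,p}_0$ functions can be approximated by nonnegative $C_c^\infty$ functions through mollification of truncations. One also needs the weak inequality to extend to such test functions, which holds because $|\nabla u|^{p-1}\in L^{p'}$. When $1<p<2$, the possible degeneracy of the operator causes no issue, since only monotonicity, not uniform ellipticity, is used.
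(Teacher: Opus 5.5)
Your proof is correct. The paper gives no argument of its own for this lemma and only cites Sakaguchi; your test with $(v-u)^+\in W^{1,p}_0(\Omega)$, combined with strict monotonicity of $\xi\mapsto|\xi|^{p-2}\xi$ and the Poincar\'e inequality, is the standard proof behind that citation, and your reading of the hypotheses ($u,v\in C^1(\overline{\Omega})$, inequality in the weak sense) is the right repair of the statement and covers how the lemma is used on the caps in Section 3. One terminological quibble, harmless because you use only monotonicity, which holds for every $p>1$: for $1<p<2$ the operator is singular rather than degenerate, and degeneracy is the case $p>2$.
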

\vskip 0.1in
In the case $p=2$, the operator $\Delta_2=\Delta$ is uniformly elliptic. Hence, the following strong comparison principle valid.
\vskip 0.1in
\begin{lemma}\label{le3} (strong comparison principle):\,Let $u,\, v\in C^1(\Omega)$ satisfy 
\begin{equation}\label{eq3}
\left\{
\begin{array}{ll}
-\Delta u(x)\geq -\Delta v(x) & x\in\Omega\\
u(x)\geq v(x) & x\in\partial\Omega.
\end{array}
\right.
\end{equation}
Then $u(x)\geq v(x)$ for any $x\in\Omega$, and $u(x_0)=v(x_0)$ for some $x_0\in\Omega$ implies $u(x)\equiv v(x)$ in $\Omega$.
\end{lemma}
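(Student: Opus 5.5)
Set $w=u-v$. Then $-\Delta w\geq 0$ in $\Omega$ and $w\geq 0$ on $\partial\Omega$, so the whole statement reduces to the strong maximum principle for superharmonic functions applied to $w$. Since $\Delta$ is linear, no structural difficulty arises: the nonlinear comparison issue that affects $\Delta_p$ disappears once we take the difference.

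For the first assertion, $w\geq 0$ in $\Omega$, we apply Lemma \ref{le1} with $p=2$ to $w$; this is legitimate because $w\in C^1(\Omega)$. Alternatively, we can use Lemma \ref{le2} with $p=2$ directly on $u,v$. Either way $u\geq v$ in $\Omega$. The inequality $-\Delta u\geq -\Delta v$ is understood in the weak sense of Definition 1.1, and this is compatible with how Lemma \ref{le1} is stated, so no extra regularity of $u$ or $v$ is needed.

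For the second assertion, suppose $u(x_0)=v(x_0)$ at some $x_0\in\Omega$. Then $w(x_0)=0$, and $w$ satisfies the hypotheses of Lemma \ref{le1} with $p=2$. The rigidity part of that lemma gives $w\equiv 0$ in $\Omega$ (on the connected domain $\Omega$), that is, $u\equiv v$.

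If one prefers a self-contained argument rather than quoting \cite{PS}, the set $\{w=0\}\cap\Omega$ is closed by continuity, and it is open by the mean value inequality for weakly superharmonic functions: $w(x)\geq \frac{1}{|B_r|}\int_{B_r(x)}w\,dy\geq 0$, which forces $w\equiv0$ on $B_r(x)$ whenever $w(x)=0$. Connectedness of $\Omega$ then concludes. The only point requiring care is this mean value inequality in the weak $C^1$ setting, which follows by mollification. I do not expect any real obstacle here.
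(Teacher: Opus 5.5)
Your proof is correct and matches the paper's approach: the paper gives no argument beyond noting that $\Delta=\Delta_2$ is linear and uniformly elliptic, so the statement is simply the classical strong maximum principle applied to $w=u-v$, which is your reduction via Lemma \ref{le1} with $p=2$. Your optional mean-value argument showing that $\{w=0\}$ is open is a valid self-contained substitute for the citation; as you note, it needs $\Omega$ connected and balls with $\overline{B_r(x)}\subset\Omega$.
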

\vskip 0.1in
Generally, the strong comparison principle is invalid for the operator $\Delta_p$ since it degenerate at the point $x$ where $|\nabla u|(x)=0$.
\vskip 0.1in
\begin{lemma}\label{le4}:\,Let $u,\, v\in C^2(\Omega)$ and $w(x)=u(x)-v(x)$. If we set $\Omega_N=\{x\in\Omega:\,|\nabla u|(x)=0,\;\mbox{or}\;|\nabla v|(x)=0\}$, then there exist functions $b_k(x)$ such that
$$\Delta_p u-\Delta_p v=a_{ij}(x)w_{ij}+b_k(x)w_k\;\;\;\forall x\in\Omega\backslash\Omega_N,$$
where $a_{ij}(x)=|\nabla v|^{p-2}\delta_{ij}+(p-2)|\nabla v|^{p-4} v_i v_j$.
\end{lemma}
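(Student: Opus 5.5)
Away from $\Omega_N$ both gradients are nonzero, so the map $\xi\mapsto|\xi|^{p-2}\xi$ is smooth near $\nabla u(x)$ and $\nabla v(x)$. The plan is to split the difference $\Delta_p u-\Delta_p v$ into two pieces. The first piece is the linearization of $\Delta_p$ at $v$ applied to $w$, which produces the stated second-order coefficients $a_{ij}$. The second piece is the part of the difference caused by the coefficients themselves changing with the gradient; it contains no second derivatives of $w$ and can be written as a first-order term $b_k w_k$.

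First we expand the operator in nondivergence form for any $C^2$ function $\phi$ at a point where $\nabla\phi\neq0$:
\begin{equation*}
\Delta_p\phi=|\nabla\phi|^{p-2}\phi_{ii}+(p-2)|\nabla\phi|^{p-4}\phi_i\phi_j\phi_{ij}=A_{ij}(\nabla\phi)\phi_{ij},
\end{equation*}
where $A_{ij}(\xi)=|\xi|^{p-2}\delta_{ij}+(p-2)|\xi|^{p-4}\xi_i\xi_j$. This expansion is valid at every point of $\Omega\backslash\Omega_N$. Note that $a_{ij}(x)=A_{ij}(\nabla v(x))$.

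Next we add and subtract $A_{ij}(\nabla v)u_{ij}$:
\begin{equation*}
\Delta_p u-\Delta_p v=A_{ij}(\nabla v)(u_{ij}-v_{ij})+\big(A_{ij}(\nabla u)-A_{ij}(\nabla v)\big)u_{ij}.
\end{equation*}
The first term is exactly $a_{ij}w_{ij}$. For the second term, fix $x\in\Omega\backslash\Omega_N$ and set $\xi(t)=\nabla v+t(\nabla u-\nabla v)$ for $t\in[0,1]$. Each $A_{ij}$ is smooth on $\mathbb{R}^n\setminus\{0\}$. Whenever the segment $\xi([0,1])$ avoids the origin, the fundamental theorem of calculus gives
\begin{equation*}
A_{ij}(\nabla u)-A_{ij}(\nabla v)=\Big(\int_0^1\partial_{\xi_k}A_{ij}(\xi(t))\,dt\Big)w_k.
\end{equation*}
We then define
\begin{equation*}
b_k(x)=u_{ij}(x)\int_0^1\partial_{\xi_k}A_{ij}(\xi(t))\,dt
\end{equation*}
at such points.

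The main obstacle is that the segment may pass through $0$ even though both of its endpoints are nonzero; this happens exactly when $\nabla u$ is a negative multiple of $\nabla v$. At such points there are two ways to handle it. The first is to integrate along any path in $\mathbb{R}^n\setminus\{0\}$ joining $\nabla v$ to $\nabla u$, such as a circular arc for $n\ge2$; this is harder to write as a coefficient of $w_k$ directly. The simpler alternative, which suffices because the lemma only asserts that some functions $b_k$ exist, is a pointwise definition: whenever $\nabla w(x)\ne0$, set
\begin{equation*}
b_k(x)=\frac{\big(A_{ij}(\nabla u)-A_{ij}(\nabla v)\big)u_{ij}\,w_k}{|\nabla w|^2},
\end{equation*}
which satisfies $b_kw_k=\big(A_{ij}(\nabla u)-A_{ij}(\nabla v)\big)u_{ij}$. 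When $\nabla w(x)=0$ we have $\nabla u=\nabla v$, so the second term vanishes and we set $b_k(x)=0$. This pointwise definition already gives the identity on all of $\Omega\backslash\Omega_N$.

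Finally, we note when the $b_k$ are also locally bounded. The integral formula shows $b_k$ is locally bounded on compact subsets where $\nabla u$ and $\nabla v$ stay away from $0$ and from being antiparallel. A bound of this kind is what the later moving-plane argument near the boundary needs, since there $\nabla u,\nabla v\approx g\,\nu\neq0$.
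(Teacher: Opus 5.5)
Your argument is correct. Its first step matches the paper's: both write $\Delta_p u-\Delta_p v=A_{ij}(\nabla v)w_{ij}+\big(A_{ij}(\nabla u)-A_{ij}(\nabla v)\big)u_{ij}$. The paper's $G$ and $\eta_{ij}=d_{ij}u_{ij}+|\nabla v|^{p-4}v_iv_jw_{ij}$ are exactly this split written out.

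You differ in how you turn the coefficient difference into a combination of the $w_k$. The paper never integrates along the segment $[\nabla v,\nabla u]$ in gradient space. It uses that $A_{ij}$ depends on $\xi$ only through $|\xi|^2$ and $\xi_i\xi_j$, together with the identities $u_iu_j-v_iv_j=u_iw_j+w_iv_j$ and $|\nabla u|^2-|\nabla v|^2=(u+v)_kw_k$. It then applies the scalar mean value theorem to $s\mapsto s^{q/2}$ between $|\nabla u|^2$ and $|\nabla v|^2$, both positive off $\Omega_N$. So the antiparallel obstruction you found never arises. The resulting $b_k$ are explicit and bounded in terms of $\sup|D^2u|$ and two-sided bounds on $|\nabla u|,|\nabla v|$, with no further restriction.

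Your division by $|\nabla w|^2$ proves the lemma as literally stated, but it also shows that statement is nearly empty. What Section 3 actually uses is that the $b_k$ are bounded; the paper says so explicitly when it invokes the lemma. Your integral formula gives boundedness only away from antiparallel gradients. As you note, that suffices for the boundary-local argument, since there $\nabla u$ and $\nabla v$ are both close to positive multiples of $\nu$.

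You can also remove the restriction within your own framework. Since $A_{ij}(-\xi)=A_{ij}(\xi)$, when $\nabla u\cdot\nabla v<0$ you may integrate along $[-\nabla v,\nabla u]$ instead. That segment stays at distance at least $\min(|\nabla u|,|\nabla v|)/\sqrt{2}$ from $0$, and its length $|\nabla u+\nabla v|$ is smaller than $|\nabla w|$. Hence the remainder is bounded by $C|\nabla w|$ and your pointwise $b_k$ stays bounded there too.
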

\vskip 0.1in
{\bf Proof:}\, For any $C^2$ function $h$, by easy computation we have
$$\Delta_p h=|\nabla h|^{p-2}\Delta h+(p-2)|\nabla h|^{p-4}h_ih_jh_{ij}$$
for $C^2$ function $h$ with $|\nabla h|\neq 0$.

Setting
\begin{equation}\label{eq4}
\begin{array}{ll}
w(x)=u(x)-v(x),\\
\eta_{ij}(x)=|\nabla u|^{p-4}u_iu_ju_{ij}-|\nabla v|^{p-4}v_iv_jv_{ij},\\
d_{ij}(x)=|\nabla u|^{p-4}u_iu_j-|\nabla v|^{p-4}v_iv_j,\\
G(x)=|\nabla u|^{p-2}\Delta u-|\nabla v|^{p-2}\Delta v,\\
F_q(x)=|\nabla u|^q-|\nabla v|^q\;\;\mbox{for}\;\;q>0,
\end{array}
\end{equation}
we can easily see that
\begin{equation}\label{eq5}
\begin{array}{ll}
\eta_{ij}(x)=d_{ij}(x)u_{ij}+|\nabla v|^{p-4}v_iv_jw_{ij},\\
d_{ij}(x)=F_{p-4}(x)u_iu_j+|\nabla v|^{p-4}(u_iu_j-v_iv_j),\\
G(x)=F_{p-2}(x)\Delta u+|\nabla v|^{p-2}\Delta w.
\end{array}
\end{equation}

By the mean value theorem, there exists a function $\xi_q(x)$ between $|\nabla u|^2$ and $|\nabla v|^2$ such that
\begin{equation}\label{eq51}
F_q(x)=\frac{q}{2}\xi_q^{\frac{q-2}{2}}(x)(u+v)_kw_k.
\end{equation}
Noting $u_iu_j-v_iv_j=u_iw_j+v_jw_i$, we have
$$d_{ij}(x)=\frac{p-4}{2}\xi_q^{\frac{p-6}{2}}(x)(u+v)_kw_ku_iu_j+|\nabla v|^{p-4}(u_iw_j+v_jw_i).$$

Let 
\begin{equation}\label{eq6}
\begin{array}{ll}
C_k^1(x)=\sum\limits_{i,j=1}^n\frac{p-4}{2}\xi_{p-4}^{\frac{p-6}{2}}(x)u_iu_ju_{ij}(u+v)_k,\\
C_k^2(x)=|\nabla v|^{p-4}\sum\limits_{i=1}^n u_iu_{ik},\\
C_k^3(x)=|\nabla v|^{p-4}\sum\limits_{i=1}^n v_iu_{ki},\\
D_k(x)=C_k^1(x)+C_k^2(x)+C_k^3(x).
\end{array}
\end{equation}
Then, it is easy to see that
\begin{equation}\label{eq7}
\begin{array}{ll}
\sum\limits_{i,j=1}^n\eta_{ij}(x) &=d_{ij}(x)u_{ij}+|\nabla v|^{p-4}v_iv_jw_{ij}\\
&=|\nabla v|^{p-4}v_iv_jw_{ij}+D_k(x)w_k.
\end{array}
\end{equation}
By (\ref{eq51}), we have 
\begin{equation}\label{eq8}
\begin{array}{ll}
G(x)&=|\nabla v|^{p-2}\Delta w+F_{p-2}(x)\Delta u\\
&=|\nabla v|^{p-2}\Delta w+\frac{p-2}{2}\xi_{p-2}^{\frac{p-4}{2}}(x)\Delta u(u+v)_kw_k.
\end{array}
\end{equation}
Therefore, if we set $C_k^4(x)=\frac{p-2}{2}\xi_{p-2}^{\frac{p-4}{2}}(x)\Delta u(u+v)_k$, $b_k(x)=C_k^4(x)+(p-2)D_k(x)$, and $a_{ij}(x)=|\nabla v|^{p-2}\delta_{ij}+(p-2)|\nabla v|^{p-4}v_iv_j$, then it is easy to see that
\begin{equation}
\begin{array}{ll}
\Delta_p u-\Delta_p v &=H(x)+(p-2)\sum\limits_{i,j=1}^n\eta_{ij}\\
&=a_{ij}(x)w_{ij}+b_k(x)w_k.
\end{array}
\end{equation}
This completes the proof of Lemma\ref{le4}.
\vskip 0.1in
\begin{lemma}\label{le5}
Assume that $f(x)$ is of class $C^{\alpha}$, and $u\in C^{1}(\overline{\Omega})$ is a solution of problem
\begin{equation}
\left\{
    \begin{array}{ll}
      -\Delta_p u=f(x),\,& x\in \Omega  \\
         u=0,\, & x\in \partial \Omega\\
         |\nabla u|\neq 0,\,& x\in\partial\Omega,
    \end{array}
    \right.
\end{equation}
then, for any $x\in \partial \Omega$, we have
\begin{equation}
|\nabla u|^{p-2}[ (n-1)\frac{\partial u}{\partial \nu}H(x)-(p-1)\frac{\partial^{2}u}{\partial \nu ^{2}}]=f(x).
\end{equation}
where $H(x)$ is the mean curvature of $\partial \Omega$ at x.
\end{lemma}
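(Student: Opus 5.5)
Since $|\nabla u|\neq 0$ on $\partial\Omega$ and $u\in C^1(\overline\Omega)$, there is a neighbourhood $U$ of $\partial\Omega$ in $\overline\Omega$ on which $|\nabla u|\geq c>0$. On $U\cap\Omega$ the equation $-\Delta_p u=f$ is therefore uniformly elliptic. I will apply standard Schauder theory in its quasilinear form (for instance, first the $C^{1,\beta}$ regularity of Lieberman, then linearization). Since $\partial\Omega\in C^{2,\alpha}$ and $f\in C^\alpha$, this gives $u\in C^{2,\alpha}$ up to the boundary in a one-sided neighbourhood of $\partial\Omega$. In that region the pointwise expansion from the proof of Lemma \ref{le4} holds:
$$\Delta_p u=|\nabla u|^{p-2}\Delta u+(p-2)|\nabla u|^{p-4}u_iu_ju_{ij}.$$
Both sides are continuous up to $\partial\Omega$, so $-\Delta_p u=f$ holds pointwise on $\partial\Omega$.

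Next I fix $x\in\partial\Omega$ and use the fact that $u=0$ on $\partial\Omega$. This gives $\nabla u(x)=\frac{\partial u}{\partial\nu}\nu$ with $\frac{\partial u}{\partial\nu}\neq 0$. Hence $|\nabla u|^{-2}u_iu_ju_{ij}=u_{\nu\nu}=\frac{\partial^2u}{\partial\nu^2}$, the second derivative along the normal line. The identity above then becomes
$$\Delta_p u=|\nabla u|^{p-2}\big(\Delta u+(p-2)u_{\nu\nu}\big).$$

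The remaining step is the classical boundary decomposition of the Laplacian. At $x$ I choose an orthonormal frame $\tau_1,\dots,\tau_{n-1},\nu$ with $\tau_\alpha$ principal directions. Differentiating $u=0$ twice along a boundary curve $\gamma$ with $\gamma(0)=x$ and $\gamma'(0)=\tau_\alpha$ gives
$$0=u_{\tau_\alpha\tau_\alpha}+\nabla u\cdot\gamma''(0)=u_{\tau_\alpha\tau_\alpha}+\frac{\partial u}{\partial\nu}\,\kappa_\alpha\,(\nu\cdot N),$$
where $\gamma''(0)$ is normal to the boundary with curvature $\kappa_\alpha$. Summing over $\alpha$ yields $\Delta u=u_{\nu\nu}-(n-1)H\frac{\partial u}{\partial\nu}$. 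The sign is fixed by the convention that $H>0$ for a ball when $\nu$ is the inward normal. As a check, for $u=(R^2-|x|^2)/2$ one gets $u_\nu=R$, $u_{\nu\nu}=-1$, $\Delta u=-n$, and $-1-(n-1)\frac1R R=-n$, which confirms the sign. Substituting, $-\Delta_pu=|\nabla u|^{p-2}[(n-1)H\frac{\partial u}{\partial\nu}-(p-1)u_{\nu\nu}]=f(x)$, which is the claim.

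The main obstacle is justifying the $C^2$ regularity up to the boundary near $\partial\Omega$, since $u$ is only assumed $C^1$. I would handle it through the nondegeneracy $|\nabla u|\neq0$ on a boundary strip. There the weak equation is a uniformly elliptic quasilinear equation in divergence form with smooth structure, so boundary Schauder estimates (via difference quotients and then the linearized equation $a_{ij}u_{ij}=-f$ with $C^\alpha$ coefficients) give $C^{2,\alpha}$ regularity up to $\partial\Omega$. The curvature computation itself is routine; the only care needed is the sign convention for $H$ relative to the inward normal, which the ball check settles.
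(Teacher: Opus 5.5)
Your proposal is correct and follows essentially the same route as the paper. Nondegeneracy of $\nabla u$ near $\partial\Omega$ gives $C^{2}$ regularity up to the boundary, and $\Delta_p u$ is expanded as $|\nabla u|^{p-2}\Delta u+(p-2)|\nabla u|^{p-4}u_iu_ju_{ij}$ with $\nabla u$ normal on $\partial\Omega$; differentiating $u=0$ twice along $\partial\Omega$ then yields $\Delta u=\frac{\partial^2 u}{\partial\nu^2}-(n-1)H\frac{\partial u}{\partial\nu}$.

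The differences are minor. The paper represents $\partial\Omega$ locally as a graph $x_n=\varphi(x')$ with $\nabla\varphi(Q)=0$ and uses $H(Q)=\frac{1}{n-1}\Delta\varphi(Q)$ instead of curves along principal directions. Your two-step regularity justification ($C^{1,\beta}$ first, then linear Schauder theory on the nondegenerate boundary strip) is more precise than the paper's bare citation of Tolksdorf.
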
 
\vskip 0.1in
{\bf Proof:}\,Since $\Omega$ is of class $C^{2,\alpha}$ and $|\nabla u|\neq 0$ on $\partial\Omega$, it follows from the regularity theory \cite{TR} of $\Delta_p$ that there exists a neighbourhood $\mathcal{N}(\partial\Omega)\subset\overline{\Omega}$ of $\Omega$ such that $u\in C^{2,\alpha}(\overline{\mathcal{N}(\partial\Omega)})$. Therefore, in $\mathcal{N}(\partial\Omega)$, we have
\begin{equation}\label{eq8}
\Delta_p u=|\nabla u|^{p-2}\Delta u+(p-2)|\nabla u|^{p-4}u_iu_ju_{ij}
\end{equation} 
For any fixed $Q\in\partial\Omega$, we can choose a coordinate frame with origin at $Q$, the $x_n$ axis directed along the inward normal to $\partial\Omega$ at $Q$, and $(x_1,\cdots,x_{n-1})$ lying in the tangent plane of $\partial\Omega$ at $Q$. In this frame, we can represent $\partial\Omega$ locally by the equation
$$x_n=\varphi(x_1,x_2,\cdots,x_{n-1})$$
with $\varphi\in C^2$ and $\varphi_i(Q)=0$ for $1\leq i\leq n-1$.

By the definition of the mean curvature of a graph, we have
\begin{equation}\label{eq9}
\begin{array}{ll}
H(x) &=\frac{1}{n-1}div(\frac{\nabla\varphi}{\sqrt{1+|\nabla\varphi|^2}})\\
&=\frac{1}{n-1}\frac{\Delta\varphi}{\sqrt{1+|\nabla\varphi|^2}}+\frac{1}{n-1}\nabla(\frac{1}{\sqrt{1+|\nabla\varphi|^2}})\cdot\nabla\varphi.
\end{array}
\end{equation}
From this we get
\begin{equation}\label{eq12}
H(Q)=\frac{1}{n-1}\Delta\varphi(Q).
\end{equation}
By the assumption that $u=0$ on $\partial\Omega$, we have the following identity in a neighbourhood of point $Q$.
$$u(x_1,\cdots,x_{n-1},\varphi(x_1,\cdots,x_{n-1}))\equiv 0.$$
Diffrentiating the above identity, we obtain
\begin{equation}\label{eq13}
\begin{array}{ll}
u_i(x)+u_n(x)\varphi_i(x)=0 \\
u_{ij}(x)+u_{in}(x)\varphi_j(x)+u_{nj}(x)\varphi_{i}(x)+u_{nn}(x)\varphi_i(x)\varphi_j(x)+u_n(x)\varphi_{ij}(x)=0 
\end{array}
\end{equation}
Evaluating these equations at point $Q$, we have
\begin{equation}\label{eq14}
\begin{array}{ll}
u_i(Q)=0 &\mbox{for}\; 1\leq i\leq n-1\\
u_{ij}(Q)+u_n(Q)\varphi_{ij}(Q)=0 &\mbox{for}\; 1\leq i,j\leq n-1.
\end{array}
\end{equation}
From (\ref{eq12}) and (\ref{eq14}), we deduce
\begin{equation}\label{eq15}
\begin{array}{ll}
|u_n(Q)|=|\nabla u|(Q) \\
\sum\limits_{i=1}^{n-1}u_{ii}(Q)+(n-1)u_n(Q)H(Q)=0.
\end{array}
\end{equation}
By equation $-\Delta_p u=f(x)$ and (\ref{eq15}), we can get
$$(n-1)|u_n|^{p-2}(Q)u_n(Q)H(Q)-(p-1)|u_n|^{p-2}(Q)u_{nn}(Q)=f(Q).$$
By arbitrariness of $Q$, we finally have
\begin{equation}
|\nabla u|^{p-2}[ (n-1)\frac{\partial u}{\partial \nu}H(x)-(p-1)\frac{\partial^{2}u}{\partial \nu ^{2}}]=f(x).
\end{equation}
This completes the proof of Lemma\ref{le5}.
\vskip 0.1in
\begin{theorem}\label{re3}
The overdetermined problem
\begin{equation}\label{eq16}
\left\{
    \begin{array}{ll}
        -\Delta_p u=f(x),\,&x\in \Omega  \\
         u=0,\;\;|\nabla u|\neq 0\, &x\in \partial \Omega\\
        |\nabla u|^{p-2}(\beta\partial_{\nu} u-\partial_{\nu\nu} u)=\frac{f(x)}{p-1}, \, &x\in \partial \Omega
    \end{array}
    \right.
\end{equation}
admits a solution for some constant $\beta$ if and only if $\Omega$ is a ball.
\end{theorem}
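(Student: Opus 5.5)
Combine Lemma \ref{le5} with the third boundary condition of (\ref{eq16}) to show that the mean curvature of $\partial\Omega$ is constant, and then invoke Alexandrov's theorem. For the converse, exhibit the radial solution on a ball.

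\emph{Necessity.} Let $u$ solve (\ref{eq16}) for some constant $\beta$. Since $u=0$ and $|\nabla u|\neq0$ on $\partial\Omega$, Lemma \ref{le5} applies and gives, at every $x\in\partial\Omega$,
$$|\nabla u|^{p-2}\Big[(n-1)\partial_\nu u\,H(x)-(p-1)\partial_{\nu\nu}u\Big]=f(x).$$
Multiply the boundary condition of (\ref{eq16}) by $p-1$ to get $|\nabla u|^{p-2}\big[(p-1)\beta\,\partial_\nu u-(p-1)\partial_{\nu\nu}u\big]=f(x)$. Subtracting the two identities leaves
$$|\nabla u|^{p-2}\,\partial_\nu u\,\big[(n-1)H(x)-(p-1)\beta\big]=0 .$$
On $\partial\Omega$ we have $u=0$, so $\nabla u$ is parallel to $\nu$ and $|\partial_\nu u|=|\nabla u|\neq0$. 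The prefactor therefore never vanishes, and
$$H(x)\equiv\frac{(p-1)\beta}{n-1}\quad\text{on }\partial\Omega .$$
Since $\partial\Omega$ is a compact, connected, embedded $C^{2,\alpha}$ hypersurface (connectedness of the boundary is implicit in the paper's use of ``domain''), Alexandrov's soap bubble theorem shows it is a round sphere, so $\Omega$ is a ball. The constant in $H$ is then forced: with the inward normal, $H=1/R$ up to the paper's sign convention, so $\beta=(n-1)/((p-1)R)$. This costs nothing extra. The main point needing care is the regularity used in Lemma \ref{le5}: $u$ must be $C^2$ up to the boundary near $\partial\Omega$. That lemma obtains this from $|\nabla u|\neq0$, and the same fact is what lets us divide.

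\emph{Sufficiency.} Now let $\Omega=B_R(x_0)$, so that $H\equiv 1/R$ constant. The point to watch is what ``any nontrivial $f$'' requires. If $f$ is arbitrary, the Dirichlet problem $-\Delta_p u=f$, $u=0$ on $\partial B_R$, has a solution $u$ (by monotone operator theory). Provided $|\nabla u|\neq0$ on $\partial\Omega$ (automatic for $f\ge0$ with $f\not\equiv0$ by Hopf's lemma), Lemma \ref{le5} with constant $H$ gives
$$|\nabla u|^{p-2}\Big(\tfrac{n-1}{(p-1)R}\partial_\nu u-\partial_{\nu\nu}u\Big)=\frac{f}{p-1}.$$
This is exactly the third condition of (\ref{eq16}) with $\beta=(n-1)/((p-1)R)$. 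So on a ball the overdetermined condition is automatically a consequence of the equation, and sufficiency reduces to solvability of the Dirichlet problem together with nonvanishing of the boundary gradient. I expect the main obstacle to be this nondegeneracy hypothesis for sign-changing $f$. The theorem should be read as existence for some $f$, or I would restrict to the class of $f$ for which Hopf's lemma applies. The simplest fully explicit witness is $f\equiv1$, for which $u=\frac{p-1}{p}n^{-1/(p-1)}\big(R^{p/(p-1)}-|x-x_0|^{p/(p-1)}\big)$ and $|\nabla u|=(R/n)^{1/(p-1)}\neq0$ on $\partial\Omega$.
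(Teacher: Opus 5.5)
Your necessity argument is the paper's own: Lemma \ref{le5} combined with the third boundary condition forces $H\equiv\frac{(p-1)\beta}{n-1}$, and Alexandrov's theorem then gives a ball. For sufficiency, the paper simply cites Lemma \ref{le5} on the ball without checking $|\nabla u|\neq0$ on $\partial\Omega$, so your caveat is not just careful but needed. For example, for $p=2$ on the unit ball the nontrivial datum $f=(2n+4)x_1$ has the unique Dirichlet solution $u=(1-|x|^2)x_1$, and $\nabla u(e_2)=0$. Hence the ``if'' direction requires a hypothesis such as $0\le f\not\equiv0$ (as in Corollary \ref{re4}), or must be read existentially in $f$ as you propose.
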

\vskip 0.1in
{\bf Proof:}\, For sufficiency,  let $\Omega$ be a ball with radius $R$. Then the mean curvature $H(x)$ of $\partial\Omega$ is $\frac{1}{R}$. Therefore, it follows from Lemma\ref{le5} that the overdetermined problem (\ref{eq16}) with $\beta=\frac{n-1}{(p-1)R}$ has solution.

For necessity, let $u$ be a solution to the problem (\ref{eq16}). Then, from Lemma\ref{le5}, it follows that the mean curvature $H(x)$ of $\partial\Omega$ can be represented by
\begin{equation}\label{eq17}
\frac{n-1}{p-1}H(x)=(\partial_{\nu} u)^{-1}(\partial_{\nu\nu} u+\frac{f(x)}{(p-1)|\nabla u|^{p-2}}).
\end{equation}
By the boundary condition
$$|\nabla u|^{p-2}(\beta\partial_{\nu} u-\partial_{\nu\nu} u)=\frac{f(x)}{p-1},$$
we can deduce that
\begin{equation}\label{eq18}
(\partial_{\nu} u)^{-1}(\partial_{\nu\nu} u+\frac{f(x)}{(p-1)|\nabla u|^{p-2}})=\beta.
\end{equation}
Combining (\ref{eq17}) and (\ref{eq18}), we get
$$H(x)=\frac{p-1}{n-1}\alpha=constant.$$
Therefore, it follows from Alexandrov's characterization of spheres \cite{Al, Ale} that $\Omega$ is a ball.
\vskip 0.1in
In the case $0\leq f(x)\not\equiv 0$, the condition $|\nabla u|\neq 0$ on $\partial\Omega$ can be removed. To see this, we need the following Hopf's boundary point lemma.
\vskip 0.1in
\begin{lemma}\label{le6} \cite{SC}:
If $u(x)\in C^{1}$ satisfies the following inequality in weak sense
\begin{equation}\label{eq19}
\left\{
    \begin{array}{ll}
        -\Delta_p u\geq 0,\,& x\in \Omega  \\
         u>0, & x\in\Omega.
    \end{array}
    \right.
\end{equation}
Then, for any $x_0\in\partial\Omega$, $u(x_0)=0$ implies $\partial_{\nu} u(x_0)>0$.
\end{lemma}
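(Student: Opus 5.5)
The proof is a barrier argument at $x_0$, in the spirit of the classical Hopf lemma, using the weak comparison principle (Lemma \ref{le2}). I will use the standing assumption that $\Omega$ is $C^{2,\alpha}$, so it satisfies an interior sphere condition at $x_0$. Choose a ball $B_R(y)\subset\Omega$ with $\overline{B_R(y)}\cap\partial\Omega=\{x_0\}$. Shrinking $R$ if needed, I also assume $\overline{B_R(y)}\subset\Omega\cup\{x_0\}$. I work in the annulus $A=B_R(y)\setminus\overline{B_{R/2}(y)}$. Since $u>0$ in $\Omega$ and $\partial B_{R/2}(y)$ is a compact subset of $\Omega$, we have $m:=\min_{\partial B_{R/2}(y)}u>0$.

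The barrier is the radial function $v(x)=\varepsilon\big(e^{-k|x-y|^2}-e^{-kR^2}\big)$, with $k>0$ and $\varepsilon>0$ to be chosen. Write $r=|x-y|$ and $\phi(r)=\varepsilon(e^{-kr^2}-e^{-kR^2})$. For radial functions,
\[
\Delta_p v=|\phi'|^{p-2}\big((p-1)\phi''+\tfrac{n-1}{r}\phi'\big).
\]
In $A$ we have $\phi'=-2k\varepsilon re^{-kr^2}<0$, so the gradient never vanishes there. This is the key point: the operator is non-degenerate along $v$, so the classical formula is legitimate. A direct computation gives
\[
(p-1)\phi''+\tfrac{n-1}{r}\phi'=2k\varepsilon e^{-kr^2}\big(2(p-1)kr^2-(p-1)-(n-1)\big).
\]
For $r\ge R/2$ and $k$ large this is positive, so $-\Delta_p v\le 0\le -\Delta_p u$ in $A$, in the weak sense. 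On the boundary, $v=0\le u$ on $\partial B_R(y)$, and $v\le\varepsilon\le m\le u$ on $\partial B_{R/2}(y)$ once $\varepsilon\le m$. Lemma \ref{le2} then gives $u\ge v$ in $A$.

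Since $u(x_0)=v(x_0)=0$ and $u\ge v$ in $A$, the difference quotients of $u$ and $v$ along the inward normal at $x_0$ compare. This gives $\partial_\nu u(x_0)\ge\partial_\nu v(x_0)=-\phi'(R)=2k\varepsilon Re^{-kR^2}>0$, where I use that the inward normal to $\partial\Omega$ at $x_0$ coincides with the inward normal to $B_R(y)$ there. This difference-quotient step uses only $u\in C^1$ up to $x_0$.

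The main obstacle is that Lemma \ref{le2} is stated for classical inequalities, while here the inequality for $u$ holds only in the weak sense. To handle this, I will use the weak form of the comparison principle. Test the difference of the two weak inequalities with $(v-u)^+$, which is admissible because it vanishes on $\partial A$. The strict monotonicity of $\xi\mapsto|\xi|^{p-2}\xi$ then forces $\nabla(v-u)^+=0$, hence $(v-u)^+=0$. This argument needs no sign information on $\nabla u$ and so avoids the degeneracy of $\Delta_p$ entirely.
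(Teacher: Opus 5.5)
Your proof is correct. The paper gives no argument for this lemma; it simply quotes it from \cite{SC}. So what you have written is a self-contained substitute for the citation rather than a variant of anything in the text.

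Your route is the classical Hopf barrier on an annulus, adapted to $\Delta_p$ in the standard way. All the nondegeneracy is carried by the explicit radial barrier $v$, whose gradient does not vanish on $\overline{A}$, so $\Delta_p v$ may be computed classically. The comparison with $u$ is done purely in weak form, via the strict monotonicity of $\xi\mapsto|\xi|^{p-2}\xi$, and uses no information on $\nabla u$.

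That feature matters here. The paper invokes this lemma precisely to \emph{obtain} $|\nabla u|\neq 0$ on $\partial\Omega$ (Corollary \ref{re4} and the beginning of the proof of Lemma \ref{le41}). An argument that linearized the operator around $u$ would therefore be circular, and yours is not.

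Writing the proof out also exposes what the citation leaves implicit. The interior sphere condition at $x_0$ comes from the standing $C^{2,\alpha}$ assumption, not from the hypotheses of the lemma. Admissibility of $(v-u)^+$ as a test function needs $\nabla u\in L^p$ up to $x_0$. This is automatic for $u\in C^1(\overline{\Omega})$, as in all the paper's applications. If you want only $u\in C^1(\Omega)\cap C(\overline{\Omega})$, test instead with $(v-\delta-u)^+$, which has compact support in $A$, and let $\delta\downarrow 0$.

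As a by-product you get the quantitative bound $\partial_\nu u(x_0)\ge 2k\varepsilon R e^{-kR^2}$, with $k=k(n,p,R)$ and $\varepsilon=\min_{\partial B_{R/2}(y)}u$.
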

\vskip 0.1in
\begin{corollary}\label{re4}
Assume $f(x)\geq 0$, and $f(x)\not\equiv 0$. Then the overdetermined problem
\begin{equation}\label{eq20}
\left\{
    \begin{array}{ll}
        -\Delta_p u=f(x),\,&x\in \Omega  \\
         u=0,\;\; & x\in \partial \Omega\\
        |\nabla u|^{p-2}(\beta\partial_{\nu} u-\partial_{\nu\nu}u)=\frac{f(x)}{p-1}, \, &x\in \partial \Omega
    \end{array}
    \right.
\end{equation}
admits a solution for some constant $\beta$ if and only if $\Omega$ is a ball.
\end{corollary}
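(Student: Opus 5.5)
The plan is to reduce Corollary \ref{re4} to Theorem \ref{re3}. The only hypothesis of Theorem \ref{re3} missing from Corollary \ref{re4} is the nondegeneracy condition $|\nabla u|\neq 0$ on $\partial\Omega$. So it suffices to show that this condition holds automatically for any solution $u$ of (\ref{eq20}) when $0\leq f\not\equiv 0$. Sufficiency needs nothing new: if $\Omega=B_R$ is a ball, the radial solution $u$ of $-\Delta_p u=f$ with $u=0$ on $\partial\Omega$ (the argument of Theorem \ref{re3} via Lemma \ref{le5}) satisfies (\ref{eq20}) with $\beta=\frac{n-1}{(p-1)R}$. It does have nonvanishing gradient on the boundary, either by the argument below or by the explicit radial formula.

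For necessity, let $u\in C^1(\overline{\Omega})$ solve (\ref{eq20}).

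\emph{Step 1: positivity.} Since $-\Delta_p u=f\geq 0$ weakly in $\Omega$ and $u=0$ on $\partial\Omega$, Lemma \ref{le1} gives $u\geq 0$ in $\Omega$. Suppose $u(x_0)=0$ for some interior point $x_0$. Then the strong maximum principle part of Lemma \ref{le1} forces $u\equiv 0$. Substituting this into the weak formulation gives $\int_\Omega f\varphi\,dx=0$ for all $\varphi\in C_c^\infty(\Omega)$, hence $f\equiv 0$, a contradiction. Therefore $u>0$ in $\Omega$.

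\emph{Step 2: Hopf.} Lemma \ref{le6} applies to $u$, because $-\Delta_p u\geq 0$ weakly and $u>0$ in $\Omega$. It gives $\partial_\nu u(x_0)>0$ at every $x_0\in\partial\Omega$, since $u(x_0)=0$ there. In particular $|\nabla u|\geq \partial_\nu u>0$ on $\partial\Omega$. Hence $u$ is a solution of (\ref{eq16}) with the same $\beta$.

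\emph{Step 3: conclusion.} Now Theorem \ref{re3} applies directly. Underneath, Lemma \ref{le5} uses $|\nabla u|\neq 0$ on $\partial\Omega$ to get $C^{2,\alpha}$ regularity near $\partial\Omega$, which gives meaning to $\partial_{\nu\nu}u$ in the boundary condition. It also shows that the mean curvature $H$ is the constant $\frac{p-1}{n-1}\beta$, and Alexandrov's theorem then concludes that $\Omega$ is a ball.

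The only delicate point is whether the boundary condition in (\ref{eq20}) makes sense a priori, since $\partial_{\nu\nu}u$ requires second derivatives up to the boundary. I would interpret a solution as in Theorem \ref{re3}, meaning $u$ is $C^2$ near $\partial\Omega$. Steps 1–2 show that nondegeneracy on $\partial\Omega$ holds anyway, so the regularity theory invoked in Lemma \ref{le5} applies. With that interpretation I expect no real obstacle beyond checking that the hypotheses of Lemmas \ref{le1} and \ref{le6} are met in the weak sense of Definition 1.1.
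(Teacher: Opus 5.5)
Your proposal is correct and follows the paper's own proof: the strong maximum principle gives $u>0$ (you usefully spell out why $u\not\equiv 0$), Hopf's lemma gives $\partial_\nu u>0$ on $\partial\Omega$, and the nondegenerate version of the result (the boundary mean-curvature identity plus Alexandrov) then applies. One small slip in the sufficiency part: $f$ is not assumed radial, so the Dirichlet solution on $B_R$ is in general not radial and the ``explicit radial formula'' is unavailable, but your first option (Hopf) together with the curvature identity with $H\equiv 1/R$ already yields the boundary condition with $\beta=\frac{n-1}{(p-1)R}$ without any symmetry.
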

\vskip 0.1in
{\bf Proof:}\, Let $u\in C^1$ be a solution to the problem
\begin{equation}
\left\{
\begin{array}{ll}
-\Delta_p u(x)=f(x), & x\in\Omega,\\
u(x)=0,& x\in\partial\Omega.
\end{array}
\right.
\end{equation}
By the assumption $0\leq f(x)\not\equiv 0$, it follows from Lemma \ref{le1} that $u(x)$ satisfies
\begin{equation}
\left\{
\begin{array}{ll}
-\Delta_p u(x)\geq 0, & x\in\Omega,\\
u(x)>0, & x\in\Omega,\\
u(x)=0,& x\in\partial\Omega.
\end{array}
\right.
\end{equation}
From this and Lemma \ref{le6}, we conclude that $|\nabla u|\neq 0$ for any $x\in\partial\Omega$. Now, the conclusion of Corollary \ref{re4} follows from Theorem \ref{re3}.
\vskip 0.1in
Let $\Omega$ be a bounded domain of class $C^2$ and $Q\in\partial\Omega$. A ball $B_R$ is called a inner ball of $\Omega$ at $Q$ if it is internally tangent to $\partial\Omega$ at $Q$ and $B_R\subset\Omega$. Obviously, inner ball of $\Omega$ at point $Q$ is not unique. The quantity 
$$R_{Q}^{in}=\sup\{R\;||\;B_R\,\,\mbox{is a inner ball of}\,\,\Omega\,\,\mbox{at}\,\,Q\}$$
is called the inner radius of $\Omega$ at $Q$.

Let $B_R(Q)$ be the ball with center at $Q$ and radius $R$. The following corner lemma is a modified version of lemma 2 in \cite{Se}.
\vskip 0.1in
\begin{lemma}\label{le7}\cite{Se}: 
Let $D^*$ be a domain with $C^2$ boundary and $T$ be a plane containing the normal to $\partial D^*$ at some point $Q$. Let $D$ denote the portion of $D^*$ lying on some particular side of $T$, Let $0<R\leq R_Q^{in}$ and $D_R=D\cap B_R(Q)$. Suppose $a_{ij}(x)$ and $b_i(x)$ are uniformly bounded functions which satisfy
$$a_{ij}(x)\xi_i\xi_j\geq\lambda |\xi|^2\;\;\;\mbox{with}\;\;\;\lambda=\mbox{positive constant}$$
and 
$$|a_{ij}(x)\xi_i\eta_j|\leq K(|\xi\cdot\eta|+|\xi|\cdot|d|)\;\;\;\mbox{for some costant}\;\;\; K.$$
Where $\xi\in\mathbb{R}^n$, $\eta$ is the unit vector normal to the plane $T$ and $d$ is the distance from $T$.

If $w\in C^2(\overline{D_R})$ satisfies
\begin{equation}
\left\{
\begin{array}{ll}
Lw(x)=a_{ij}(x)w_{ij}+b_i(x)w_i\leq 0 & x\in D_R\\
w(x)\geq 0 & x\in D_R\\
w(Q)=0,
\end{array}
\right.
\end{equation}
then, for any direction $s$ at $Q$ which enters $D_R$ non-tangentially, we have
$$\partial_{s} w(Q)>0\,\,\,\;\mbox{or}\,\,\,\;\partial_{ss}w(Q)>0$$
unless $w\equiv 0$.
\end{lemma}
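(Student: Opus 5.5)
We adapt Serrin's corner lemma (Lemma 2 of \cite{Se}): build an explicit barrier on a small region near $Q$ and compare with $w$ through the weak maximum principle for $L$. We place $Q$ at the origin, take $T=\{x_1=0\}$ with $\eta=e_1$ and $D\subset\{x_1>0\}$ (so $d=x_1$), and let $\nu$ be the inward normal at $Q$, which lies in $T$. The first step is to show that $w>0$ in $D_R$ unless $w\equiv 0$. Since $L$ is uniformly elliptic with bounded coefficients, the classical strong maximum principle applied to $w\ge 0$, $Lw\le 0$ gives this on the connected component of $D_R$ adjacent to $Q$. We then work only there.

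The key construction uses the inner ball $B=B_{R'}(x^*)$, $R'\le R^{in}_Q$, tangent at $Q$. Following Serrin, we consider
\[
z(x)=x_1\,\bigl(\,|x-x^*|^{2}\ldots\bigr)
\]
in its standard form: with $r=|x-x^*|$, put
\[
z=x_1\bigl(e^{-\gamma r^2}-e^{-\gamma R'^2}\bigr)+\epsilon\, \ldots,
\]
or more simply $z=x_1\,\psi(x)$ with $\psi=e^{-\gamma r^2}-e^{-\gamma R'^2}$. The goal is to show $Lz\ge 0$ on $B\cap\{x_1>0\}\cap B_\delta(Q)$ for $\gamma$ large and $\delta$ small. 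Computing gives
\[
L(x_1\psi)=x_1L\psi+2a_{1j}\psi_j+b_1\psi .
\]
The term $x_1L\psi$ is $\ge c\,x_1\gamma^2 r^2e^{-\gamma r^2}$ minus lower order. The cross term $2a_{1j}\psi_j$ is where the structural hypothesis enters: with $\xi=\nabla\psi$ it gives $|a_{ij}\psi_i\eta_j|\le K(|\psi_1|+|\nabla\psi|x_1)$. Near $Q$ we have $|\psi_1|=2\gamma|x_1|e^{-\gamma r^2}$, since $x^*$ lies on $T$ and so $x^*_1=0$; both pieces are therefore $O(\gamma x_1 e^{-\gamma r^2})$ and are absorbed by the $\gamma^2$ term. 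The term $b_1\psi$ has no $x_1$ factor, and this is the expected main obstacle. Near $Q$, however, $\psi\le C\gamma\,\mathrm{dist}(x,\partial B)$, so we would instead restrict to a region where $\psi$ is small relative to $x_1\gamma$. If that fails, we would use Serrin's original device: work in the region $\{x_1>0\}\cap B\cap B_\delta(Q)$ and choose $z=x_1\psi-\kappa\,\psi^{2}$-type corrections, or handle $b_1\psi$ by replacing $\psi$ with $e^{-\gamma r^2}-e^{-\gamma R'^2}$ using $\gamma$ large relative to $\sup|b|$. These adjustments make $Lz\ge 0$, at the cost of shrinking $\delta$.

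For the comparison step, $w>0$ on the part of $\partial(B\cap\{x_1>0\}\cap B_\delta(Q))$ lying on $\partial B_\delta(Q)$ inside $D$, and this part is compact away from $T\cap\partial B$. Hence $w\ge\epsilon z$ there for small $\epsilon$, while on $T$ and on $\partial B$ we have $z=0\le w$. The maximum principle applied to $w-\epsilon z$, which satisfies $L(w-\epsilon z)\le 0$, yields $w\ge\epsilon z$ in the region. Both $w$ and $z$ vanish at $Q$, so the comparison transfers to derivatives there. For a non-tangential direction $s$ entering $D_R$, $z(ts)\approx t^2 (s\cdot e_1)(2\gamma R' s\cdot\nu)e^{-\gamma R'^2}$, whose second derivative along $s$ is positive because $s\cdot e_1>0$ and $s\cdot\nu>0$. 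Therefore either $\partial_s w(Q)>0$, or $\partial_s w(Q)=0$ and then $\partial_{ss}w(Q)\ge\epsilon\,\partial_{ss}z(Q)>0$, which is the dichotomy claimed.
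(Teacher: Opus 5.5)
Your strategy is the right one. The paper gives no proof of this lemma and only cites Serrin's Lemma 2, and your outline follows Serrin's barrier argument: the strong maximum principle, a barrier $z=x_1\psi$ built on an interior ball centred on $T$, comparison on $K=B\cap\{x_1>0\}\cap B_\delta(Q)$, and transfer of $w\ge\epsilon z$ to derivatives at $Q$. The final transfer step is correct.

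\emph{Gap in the barrier.} The decisive inequality is $Lz\ge 0$ in $K$. You flag the obstacle but do not prove it, and for $z=x_1\psi$ it is false in general. You have $Lz=x_1L\psi+2a_{1j}\psi_j+b_1\psi$. On $T$ one has $x_1=0$ and $\psi_1=0$, so the structural hypothesis forces $a_{1j}\psi_j=0$ there, and $Lz=b_1\psi$ on $T\cap B$. If $b_1<0$ near $T$, then $Lz<0$ at points of $K$ with $x_1\ll\psi/\gamma^2$, for every $\gamma$ and every $\delta$. Your fallbacks do not repair this:
\begin{itemize}
\item Restricting to a region where $\psi\lesssim\gamma x_1$ creates a lateral boundary through $Q$. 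On it you would need $w\ge\epsilon z$, i.e.\ quadratic growth of $w$ at $Q$, which is the conclusion itself.
\item Adding $-\kappa\psi^2$ contributes $-2\kappa(\psi L\psi+a_{ij}\psi_i\psi_j)<0$, which has the wrong sign.
\item Adding $+\kappa\psi^2$ gives a barrier that no longer vanishes on $T$, where $w$ may vanish. It does vanish there in Section~3, since $w=v-u=0$ on $T_e$.
\item Your third suggestion is the same function $\psi$.
\end{itemize}

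\emph{A fix that works.} Replace the factor $x_1$ by $\chi=x_1+Mx_1^2$. Then
\[
L(\chi\psi)=\chi L\psi+\psi L\chi+2\chi'a_{1j}\psi_j,
\]
with $L\chi=2Ma_{11}+b_1(1+2Mx_1)\ge 2M\lambda-(1+2M\delta)\sup|b|>0$ for $M$ large and $\delta$ small.
\begin{itemize}
\item The term $\psi L\chi\ge 0$ absorbs the bad $b_1\psi$ contribution.
\item The cross term is still bounded by $C\gamma x_1e^{-\gamma r^2}$. It is dominated by $\chi L\psi\ge x_1e^{-\gamma r^2}(4\lambda\gamma^2r^2-C\gamma)$ once $\gamma$ is large.
\item Since $x_1^2\psi=O(|x|^3)$, the second derivatives of the barrier at $Q$ are unchanged, and so is your final computation of $\partial_{ss}z(Q)>0$.
\end{itemize}

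\emph{Gap in the comparison step.} This one is smaller. The spherical part $\partial B_\delta(Q)\cap\overline B\cap\{x_1\ge 0\}$ of $\partial K$ is not ``away from $T$''. It meets $T$, where $w$ may vanish, so positivity plus compactness does not give $w\ge\epsilon z$ there. Since $z\le Cx_1$, what you need is $w\ge c\,x_1$ near $T\cap\overline B\cap\partial B_\delta(Q)$. This follows from three facts:
\begin{itemize}
\item Hopf's lemma on the flat face $T\cap D^*$ of $D$: at a point where $w=0$, the facts $w>0$ in $D$ and $Lw\le 0$ give $\partial_1 w>0$.
\item Continuity of $\nabla w$ up to $T$.
\item Compactness of that boundary piece.
\end{itemize}
You should also take the radius $R'<R^{in}_Q$ strictly. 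Then $\overline B\setminus\{Q\}\subset D^*$, so the points of $\partial B\cap\partial B_\delta(Q)$ with $x_1>0$ lie in $D$, where $w>0$. With these two repairs your argument becomes a complete proof of the lemma.
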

\section{Proof of Theorem 1.1}
Before proceeding our proof, we prove the following lemma first.
\vskip 0.1in
\begin{lemma}\label{le41} Assume that $f\geq 0$, $f\not\equiv 0$, and that (F)-(G) hold. If the overdetermined problem
\begin{equation}\label{eq41}
\left\{
\begin{array}{ll}
-\Delta_p u(x)=f(x)& x\in\Omega\\
\partial_{\nu}u(x)=g(x)& x\in\partial\Omega\\
u(x)=0& x\in\partial\Omega
\end{array}
\right.
\end{equation}
has a solution, then $\Omega$ must be a ball.
\end{lemma}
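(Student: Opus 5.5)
We will run a boundary-local version of Serrin's moving-plane argument. Fix an arbitrary direction $e\in\mathbb{S}^{n-1}$ and let $(\sum_{e,\lambda_e},T_{e,\lambda_e})$ be the extremely admissible cap. The first step is a sign analysis. Since $f\geq 0$, $f\not\equiv 0$, Lemma \ref{le1} gives $u>0$ in $\Omega$, and then Lemma \ref{le6} gives $\partial_\nu u>0$ on $\partial\Omega$. By the regularity argument used in the proof of Lemma \ref{le5}, $u\in C^{2,\alpha}$ in a collar $\mathcal{N}(\partial\Omega)$ on which $|\nabla u|\geq c_0>0$.

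Next we compare $u$ with its reflection on the cap. Set $\Sigma=\sum_{e,\lambda_e}$ and $v(x)=u(\mathcal{R}_{e,\lambda_e}(x))$ for $x\in\Sigma$; this is well defined because $\mathcal{R}_{e,\lambda_e}(\Sigma)\subset\Omega$. Since $\Delta_p$ is invariant under reflections, $-\Delta_p v(x)=f(\mathcal{R}_{e,\lambda_e}x)\geq f(x)=-\Delta_p u(x)$ in $\Sigma$ by (F). On $\partial\Sigma$ we have $v\geq 0=u$ on $\partial\Sigma\cap\partial\Omega$ and $v=u$ on $\partial\Sigma\cap T_{e,\lambda_e}$. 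Lemma \ref{le2} then yields $w:=v-u\geq 0$ in $\Sigma$. We never need strong comparison in the interior, where $\nabla u$ may vanish. Instead we work only near the touching point, inside the collar, where both $|\nabla u|$ and $|\nabla v|$ are bounded below. There Lemma \ref{le4} writes $0\geq -\Delta_p v+\Delta_p u$ in the form $a_{ij}w_{ij}+b_kw_k\leq 0$ with uniformly elliptic, bounded coefficients. The linear strong maximum principle (as in Lemma \ref{le3}), applied on the connected component $U$ of $\Sigma\cap\mathcal{N}(\partial\Omega)$ containing the touching point, gives either $w\equiv 0$ on $U$ or $w>0$ on $U$.

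Now we split into the two cases of Definition 1.3. In \textbf{Case 1}, $\overline{\mathcal{R}(\Sigma)}$ is internally tangent to $\partial\Omega$ at $P\notin T_{e,\lambda_e}$, with $P=\mathcal{R}(P')$ and $P'\in\partial\Sigma\cap\partial\Omega$. Then $w(P')=0$. If $w>0$ near $P'$, Hopf's lemma for the linearized operator gives $\partial_\nu w(P')>0$, that is, $g(P)>g(P')$ after accounting for the reflected normals. This contradicts (G), which gives $g(P')\geq g(P)$. In \textbf{Case 2}, $T_{e,\lambda_e}$ is orthogonal to $\partial\Omega$ at $Q$. Here we use the corner Lemma \ref{le7} on $D_R=\Sigma\cap B_R(Q)$ with $R$ small enough that $D_R$ lies in the collar; the structure hypothesis on $a_{ij}$ holds because $\nabla v$ and $\nabla u$ are $C^1$-close to each other near $T$, exactly as in Serrin. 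One computes that $w$ and $\nabla w$ vanish at $Q$, and that the second derivatives of $w$ in the direction $s=(e+\nu)/\sqrt2$ are expressed through the Neumann data and their tangential derivative. Using $\partial_\nu u=g$, $u=0$ and $\frac{\partial g}{\partial e}(Q)\geq 0$ from (G), one shows $\partial_{ss}w(Q)\leq 0$, again a contradiction.

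Hence $w\equiv 0$ on $U$. We then propagate this to all of $\Sigma$: the open-closed set $\{w=0\}$ spreads along the collar to all of $\partial\Sigma\cap\partial\Omega$, so $\partial\Omega$ is symmetric about $T_{e,\lambda_e}$. The only delicate point is connectedness of $\partial\Sigma\cap\partial\Omega$ near the touching point; if needed, rerun the argument at each touching point. Because $\Omega$ is bounded and $\mathcal{R}(\Sigma)\subset\Omega$, symmetry of the boundary means $\Omega$ is symmetric about a hyperplane orthogonal to $e$. Since $e$ was arbitrary, $\Omega$ is symmetric in every direction, and all these planes pass through a common point (the centroid). A domain symmetric under all reflections through a point is a ball. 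The main obstacle is Case 2: verifying the corner-lemma hypotheses and the second-order computation $\partial_{ss}w(Q)\leq 0$ from the non-constant $g$. I would do this by writing $\partial\Omega$ locally as a graph, differentiating $\partial_\nu u=g$ tangentially in the direction $e$, and using the symmetry of $T$'s orthogonal intersection, following Serrin's computation with the extra $\partial_e g$ term.
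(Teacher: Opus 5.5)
Your proposal follows the paper's proof in all essentials. Hopf's lemma gives a collar on which $|\nabla u|$ is bounded below, and weak comparison gives $w=v-u\ge 0$ on the whole cap. Linearization is used only near the boundary: Hopf's lemma plus (G) at a tangency point, and Serrin's corner lemma plus $\partial_e g(Q)\ge 0$ at an orthogonality point.

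In case (i), your dichotomy at every contact point shows the contact set is relatively open. That is the same argument as the paper's contradiction from a contact point adherent to non-contact points.

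In case (ii), your choice $s=(e+\nu)/\sqrt{2}$ is a genuine economy. In coordinates with $Q=0$, $e=e_1$, $\nu(Q)=e_n$, the reflection gives $\nabla w(0)=0$ and $w_{11}(0)=w_{nn}(0)=0$. Hence $\partial_s w(0)=0$ and $\partial_{ss}w(0)=w_{1n}(0)=-2\partial_e g(Q)\le 0$, which already contradicts Lemma~\ref{le7} unless $w\equiv 0$ near $Q$. The paper instead shows that every second derivative of $w$ vanishes at $Q$, via $\phi_{1l}(0)=0$ and a Taylor argument forcing $w_{1n}(0)=0$. It also assumes there are no contact points off $T_e$. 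You need neither.

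The step that needs real repair is the structure condition (C2) of Lemma~\ref{le7}. It does not come from $\nabla u$ and $\nabla v$ being close near $T_e$; on $T_e$ they differ by $-2u_1e$. It comes from their being mirror images there, and it holds only for a symmetrized linearization.

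The one-sided form of Lemma~\ref{le4} that you set up fails (C2) for $p\neq 2$. Its mixed entries $a_{1j}=(p-2)|\nabla u|^{p-4}u_1u_j$, $j\ge 2$, do not vanish on $T_e$, because nothing forces $u_1=0$ there. You need the paper's coefficients $2a_{ij}=(|\nabla u|^{p-2}+|\nabla v|^{p-2})\delta_{ij}+(p-2)(|\nabla u|^{p-4}u_iu_j+|\nabla v|^{p-4}v_iv_j)$, or Serrin's $\int_0^1$ linearization. Since $|\nabla v|=|\nabla u|$ and $\nabla v=(-u_1,u_2,\dots,u_n)$ on $T_e$, the entries $a_{1j}$, $j\ge 2$, vanish there. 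Lipschitz continuity then gives $|a_{1j}|\le Kx_1$.

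There are three smaller points.

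First, linearize on a small set $\Sigma\cap B_r(P')$ rather than on a component of $\Sigma\cap\mathcal{N}(\partial\Omega)$, so that its reflection also lies in the collar. Lemma~\ref{le4} needs both gradients bounded below.

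Second, your last sentence is false as stated, since a concentric annulus is invariant under all reflections through its center. You must use that your symmetry plane is the extremal one. For an annulus with radii $a<R$, that plane lies at distance $(R+a)/2$ from the center; this is how the proof of Theorem~\ref{cor2} excludes annuli.

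Third, rerunning the argument at each touching point only shows that the contact set is relatively open in $\partial\Omega\cap\{x\cdot e>\lambda_e\}$. To conclude that it is all of that set, you still need the set to be connected. The paper's choice of an edge contact point tacitly uses the same fact, so this is not a defect relative to the paper, but your remedy does not settle it.

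With these repairs your argument is complete to the same standard as the paper's.
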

\vskip 0.1in
{\bf Proof:}\;Let $u(x)$ be the solution to problem (\ref{eq41}). By the strong maximum principle Lemma \ref{le1}, we have $u(x)>0$ for any $x\in\Omega$. By the regularity theory, we also know that $u\in C^{1,\alpha}(\overline{\Omega})$ (see \cite{TR}). Since $u(x)=0$ for $x\in\partial\Omega$, it follows from Lemma \ref{le6} that $g(x)=\partial_{\nu} u(x)>0$ for any $x\in\partial\Omega$. Because $\partial\Omega$ is compact, we have $\min\limits_{x\in\partial\Omega}g(x)=m>0$. By the continuity of $\nabla u$, we know that there exists a neighbourhood $\mathcal{N}(\partial\Omega)\subset\Omega$ of $\partial\Omega$ such that
$$|\nabla u|(x)\geq\frac{m}{2}\;\;\; \mbox{for any}\;\;\; x\in\overline{\mathcal{N}(\partial\Omega)}.$$
From this, we see that the operator $div(|\nabla u|^{p-2}\nabla\cdot)$ is uniformly elliptic  on $\mathcal{N}(\partial\Omega)$. Now, by the regularity theory of uniformly elliptic equations, we have $u(x)\in C^2(\overline{\mathcal{N}(\partial\Omega)})$, and $\Delta_p u$ can be rewritten as
\begin{equation}\label{eq31}
\Delta_p u=|\nabla u|^{p-2}\Delta u+(p-2)|\nabla u|^{p-4}u_iu_ju_{ij}\;\;\;\forall x\in \mathcal{N}(\partial\Omega).
\end{equation}

From now on, we begin our procedure of moving plane. For an arbitrarily fixed $e\in\mathbb{S}^{n-1}$, and any $\lambda\in\mathbb{R}$, we set
$$T_{e,\lambda}=\{x\in\mathbb{R}^n:\;x\cdot e=\lambda\}\quad\quad\mathcal{R}_{e,\lambda}(x)=x-2(x\cdot e-\lambda)e.$$
Let $\sum_{e,\lambda}=\{x\in\Omega:\;x\cdot e>\lambda\}$ and $\mathcal{R}_{e,\lambda}(\sum_{e,\lambda})=\{\mathcal{R}_{e,\lambda}(x):\;x\in\sum_{e,\lambda}\}$. Since $\Omega$ is bounded, it is evident that $\sum_{e,\lambda}=\phi$ for $\lambda>0$ large enough. Set $\lambda_0=\inf\{\lambda:\;\sum_{e,\lambda}=\phi\}$. Because $\Omega$ is of class $C^{2,\alpha}$, it is easy to see that $(\sum_{e,\lambda}, T_{e,\lambda})$ is an admissible cap of $\Omega$ for $\lambda<\lambda_0$ and sufficiently close to $\lambda_0$. Moving the plane $T_{e,\lambda}$ by decreasing $\lambda$ continuously and keeping the admissibility of the cap $(\sum_{e,\lambda}, T_{e,\lambda})$, we can finally arrive an extreme position with $\lambda=\lambda_e$ such that $(\sum_{e,\lambda_e}, T_{e,\lambda_e})$ is an extremely admissible cap of $\Omega$. For simplicity of notation, we denote $\sum_{e,\lambda_e}$ by $\sum_{e}$, $T_{e,\lambda_e}$ by $T_e$, $\mathcal{R}_{e,\lambda_e}$ by $\mathcal{R}_e$, and $\mathcal{R}_{e,\lambda_e}(\sum_{e,\lambda_e})$ by $\sum_e^{\mathcal{R}_e}$. Let $\sum_e^c=\Omega\backslash{\sum_e}$. We aim to prove $\sum_e^c=\sum_e^{\mathcal{R}_e}$. If this is done, then $\Omega$ is symmetry about $T_e$ and the conclusion of Lemma \ref{le41} follows immediately from the arbitrariness of $e\in\mathbb{S}^{n-1}$.

By the definition of extremely admissible cap of $\Omega$, we have following two possible cases: 
\vskip 0.1in
(i)\; $\partial\sum_e\subset\overline{\Omega}$ and it tangent to $\partial\Omega$ at some point $P$ away from $T_e$, or
\vskip 0.1in
(ii)\; $T_e$ orthogonal to $\partial\Omega$ at some point $Q$.

To prove $\sum_e^c=\sum^{\mathcal{R}_e}_e$, we introduce a new function $v$ defined in $\sum_e$ by the formula
$v(x)=u(\mathcal{R}_e(x))$ for any $x\in\overline{\sum_e}$. It is easy to see that $v(x)$ satisfies
\begin{equation}\label{eq32} 
\left\{
\begin{array}{ll}
-\Delta_p v(x)=f(\mathcal{R}_e(x)) & x\in\sum_e\\
v(x)=u(x) & x\in T_e\\
v(x)\geq 0 & x\in\partial\sum_e\backslash T_e.
\end{array}
\right.
\end{equation}
Since $(\sum_e, T_e)$ is extremely admissible, by (F), we have 
\begin{equation*}
\begin{array}{cc}
f(x)\leq f(\mathcal{R}_e(x))\;\;\; \forall x\in\sum_e
\end{array}
\end{equation*}.
Noticing that $u(x)$ satisfies
\begin{equation}\label{eq33} 
\left\{
\begin{array}{ll}
-\Delta_p u(x)=f(x) & x\in\sum_e\\
u(x)=0 & x\in\partial\sum_e\backslash T_e,
\end{array}
\right.
\end{equation}
we get
\begin{equation}\label{eq34} 
\left\{
\begin{array}{ll}
-\Delta_p v(x)\geq -\Delta_p u(x) & x\in\sum_e\\
v(x)-u(x)=0 & x\in T_e\\
v(x)-u(x)\geq 0 & x\in\partial\sum_e\backslash T_e.
\end{array}
\right.
\end{equation}

At this stage, if the operator $\Delta_p$ obeys strong comparison principle, then, by an argument similar to that used by Serrin in \cite{Se}, we can reach our final goal. Sadly, the strong comparison principle is, in general, not valid for $\Delta_p$ due to the fact that it may be degenerate when $p>2$. Fortunately, it is not too bad that the strong comparison principle is still valid for $\Delta_p$ near the boundary of $\Omega$. Therefore, we can still reach our final goal by some careful analysis near the boundary of $\Omega$.

In case (i), we have $\partial\sum_e^c\cap(\partial\sum^{\mathcal{R}_e}_e\backslash{T_e})\neq\phi$. if $\sum_e^c\neq\sum^{\mathcal{R}_e}_e$, then we can see that
\begin{equation*}
\begin{array}{cc}
\sum_e^c\cap(\partial\sum^{\mathcal{R}_e}_e\backslash{T_e})\neq\phi.
\end{array}
\end{equation*}
Therefore, we can choose a point $P\in (\partial\sum_e^c\cap(\partial\sum^{\mathcal{R}_e}_e\backslash T_e)$ such that $\partial\sum^{\mathcal{R}_e}_e$ tangent to $\partial\Omega$ at $P$ and $\sum_e^c\cap B_r(P)\cap(\partial\sum^{\mathcal{R}_e}_e\backslash T_e)\neq\phi$ for arbitrary ball $B_r(P)$ centered at $P$. Let $\mathcal{V}_r^{\mathcal{R}_e}(P)=\sum_e^c\cap B_r(P)\cap\sum^{\mathcal{R}_e}_e$, and $\partial_p\mathcal{V}_r^{\mathcal{R}_e}(P)=\sum_e^c\cap B_r(P)\cap(\partial\sum^{\mathcal{R}_e}_e\backslash T_e)$ which we call it the principle boundary of $\partial\mathcal{V}_r^{\mathcal{R}_e}(P)$. Let $Q$ be the reflection of $P$ about $T_e$, $\mathcal{V}_r(Q)$ be the reflection of $\mathcal{V}_r^{\mathcal{R}_e}(P)$ about $T_e$, and $\partial_p\mathcal{V}_r(Q)$ be the reflection of $\partial_p\mathcal{V}_r^{\mathcal{R}_e}(P)$. Evidently, $Q\in\partial\mathcal{V}_r(Q)$, $\mathcal{V}_r(Q)\subset\sum_e$, and $\partial_p\mathcal{V}_r(Q)\subset\partial\Omega$. Choosing $r_0>0$ so small that $\mathcal{V}_{r_0}(Q)\;\;\mbox{and}\;\;\mathcal{V}^{\mathcal{R}_e}_{r_0}(P)\subset\mathcal{N}(\partial\Omega)$, we have $u(x),v(x)\in C^2(\overline{\mathcal{V}_{r_0}(Q))}$. 

Let $w(x)=v(x)-u(x)$. We introduce a linear differential operator $L$ such that
$$Lw(x)=\Delta_p v(x)-\Delta_p u(x).$$

By Lemma \ref{le4}, $Lw(x)$ can be written as
$$Lw(x)=a_{ij}(x)w_{ij}+b_k(x)w_k\;\;\;\forall x\in\mathcal{V}_{r_0}(Q)$$
with $a_{ij}(x)=|\nabla u|^{p-2}(x)\delta_{ij}+(p-2)|\nabla u|^{p-4}(x)u_i(x)u_j(x)$ and $b_i(x)$ being uniformly bounded functions.

By the computations in the proof of Lemma \ref{le5}, we can easily see that
$$
\begin{array}{ll}
a_{ij}(Q)\xi_i\xi_j &=|\nabla u|^{p-2}(Q)\delta_{ij}\xi_i\xi_j+(p-2)|\nabla u|^{p-4}(Q)u_i(Q)u_j(Q)\xi_i\xi_j\\
&\geq\min\{1,p-1\}|\partial_{\nu} u|^{p-2}(Q)|\xi|^2\geq\kappa|\xi|^2
\end{array}
$$
with $\kappa$ being a positive constant. By the continuity of $a_{ij}(x)$, we can assume that the matrix $(a_{ij}(x)$ is uniformly positive definite in $\mathcal{V}_{r_0}(Q)$ by choosing $r_0$ even more small. Therefore, $L$ is uniformly elliptic in $\mathcal{V}_{r_0}(Q)$. By weak comparison principle Lemma \ref{le2} and (\ref{eq34}), we have $w(x)\geq 0$ on $\overline{\sum_e}$. Noticing further that $\partial_p\mathcal{V}^{\mathcal{R}_e}_{r_0}(P)\neq\phi$, we have $w(x)>0$ on $\partial_p\mathcal{V}_{r_0}(Q)$. Consequently, $w(x)$ satisfies 
\begin{equation}\label{eq35} 
\left\{
\begin{array}{ll}
Lw(x)\leq 0 & x\in\mathcal{V}_{r_0}(Q)\\
w(x)\geq 0 & x\in\overline{\mathcal{V}_{r_0}(Q)}\\
w(x)>0 & \mbox{on}\;\;\partial_p\mathcal{V}_{r_0}(Q).
\end{array}
\right.
\end{equation}
From this and the strong maximum principle for uniformly elliptic equations, we get
$$w(x)>0\;\;\; x\in\mathcal{V}_{r_0}(Q).$$
Since $w(Q)=0$ and $\mathcal{V}_{r_0}(Q)$ verify the inner ball condition, it follows from Hopf's boundary point lemma that
\begin{equation}\label{eq36}
\frac{\partial w}{\partial\nu}(Q)>0.
\end{equation}
On the other hand, by the assumption of $g(x)$, we have
$$\frac{\partial v}{\partial\nu}=g(P)\leq g(Q)=\frac{\partial u}{\partial\nu}(Q).$$
This contradicts to (\ref{eq36}). Therefore, we have $\sum_e^c=\sum^{\mathcal{R}_e}_e$ in case (i).

\vskip 0.1in
It remains to treat case (ii). To finish this task, we choose a rectangular coordinate frame with $Q=0$, $T_e=\{x\in\mathbb{R}^n:\;x_1=0\}$, $e=e_1=(1,0,\cdots,0)$, and $\nu(Q)=e_n=(0,\cdots,0,1)$. In this frame, $\sum_e$ lies in the half space $x_1>0$, and
\begin{equation}\label{eq37}
v(x_1,x_2,\cdots,x_n)=u(-x_1,x_2,\cdots,x_n)
\end{equation}
due to $v(x)$ being the reflection of $u(x)$. Moreover, $\partial\Omega$ can be represented by 
$$x_n=\phi(x')\quad\quad x'=(x_1,x_2,\cdots,x_{n-1})\in\mathbb{R}^{n-1}.$$
By Lemma \ref{le5}, we have
\begin{equation}\label{eq38}
\left\{
\begin{array}{ll}
u_i(0)=0\,\,\,\,\;u_n(0)=g(0)\\
u_{ij}(0)+g(0)\phi_{ij}(0)=0\\ 
u_{nn}(0)=\frac{1}{p-1}((n-1)g(0)H(0)-f(0)g^{2-p}(0))
\end{array}
\right.
\end{equation}
with $i,j$ ranging from $1$ to $n-1$, and $H(0)$ being the mean curvature of $\partial\Omega$ at $0$.
From (\ref{eq37}) and (\ref{eq38}), we conclude that all first and second derivatives of $u$ and $v$ agree at $0$, possibly with the exception of $u_{1j}$ and $v_{1j}$ for $j$ ranging from $2$ to $n$. Next, we prove $u_{1j}(0)=v_{1j}(0)=0$ for $j=2,\cdots, n$ under the assumption that $\sum_e^c\neq\sum^{\mathcal{R}_e}_e$. 

At first, it follows from the second equation in (\ref{eq38}) that 
\begin{equation}\label{eq39}
u_{1j}(0)+g(0)\phi_{1j}(0)=0,\,\,\,\mbox{for}\,\,\, j=2,\cdots,n-1.
\end{equation}
In the second, if $\sum^{\mathcal{R}_e}_e\neq\sum_e^c$, we may assume $\partial\sum^{\mathcal{R}_e}_e\backslash T_e\subset\sum_e^c$. Otherwise, we return to the case (i).

Taking the assumption $\partial\sum^{\mathcal{R}_e}_e\backslash T_e\subset\sum_e^c$ into account, it follows easily from a Taylor's expansion formula for $\phi(x)$ at $0$ that
$$\phi_{1l}(0)=0,\,\,\,\mbox{for}\,\,\,l=2,\cdots,n-1.$$
Combining this with (\ref{eq39}), we obtain
$$v_{1l}(0)=-u_{1l}(0)=0,\,\,\,\mbox{for}\,\,\,l=2,\cdots,n-1.$$

Lastly, to prove $u_{1n}(0)=v_{1n}(0)=0$, we choose a ball $B_r(0)$ with center at $0$ and radius $r$ so small that $D^*_r=B_r(0)\cap\Omega\subset\mathcal{N}(\partial\Omega)$. Let $w(x)=v(x)-u(x)$. Then, by Lemma \ref{le4}, we may assume that $Lw(x)=\Delta_p v(x)-\Delta_p u(x)$ defines a linear uniformly elliptic operator 
$$L=a_{ij}(x)\partial_{ij}+c_k\partial_k$$ 
in $D_r=D^*_r\cap\sum_e$ by choosing $r$ even more small. Evidently $w(x)\geq 0$ on $D_r$ and $w(x)>0$ on $B_r(0)\cap(\partial\sum_e\backslash T_e)$. Furthermore, $Lw\leq 0$ due to the fact that $(\sum_e, T_e)$ is an extremely admissible cap of $\Omega$. Therefore, it follows from the strong maximum principle that $w(x)>0$ on $D_r$. Noting that the only possible nonzero term of the first and second derivatives of $w$ at $0$ is $w_{1n}(0)$, then, for $x\in D_r$, $w(x)$ can be expanded at $Q$ as
\begin{equation}\label{eq3100}
w(x)=w_{1n}(0)x_1x_n+o(|x|^2).
\end{equation}
By the boundary condition $u_\nu(x)=g(x)$ and the local expression of $\partial\Omega$, we have  
\begin{equation}\label{eq310}
u_n(x)-\sum_{i=1}^{n-1}u_{i}(x)\phi_{i}(x')\equiv g(x',\phi(x'))\sqrt{1+|\nabla\phi(x')|^2}
\end{equation}
in a neighbourhood of $0$.

Differentiating (\ref{eq310}) with $x_1$ and evaluating at $0$, we get
\begin{equation}\label{eq311}
u_{1n}(0)=\partial_{x_1} g(0).
\end{equation}
Hence
$$w_{1n}(0)=v_{1n}(0)-u_{1n}(0)=-2\partial_{x_1} g(0).$$
By the assumption (G), we have $\partial_{x_1} g(0)\geq 0$. This leads to $w_{1n}(0)\leq 0$. Evidently, $x_1>0$, $x_n>0$, and $w(x)>0$ for $x\in D_r$. These and (\ref{eq3100}) compel $w_{1n}(0)=0$.

At this stage, Lemma \ref{le7} can be used to reach a contradiction. However, in order to use Lemma \ref{le7}, one must verify the following two construction conditions for the coefficient matrix $(a_{ij}(x))_{n\times n}$ of linear differential operator $L$. 
\vskip 0.1in
(C1):\,There exists a positive constant $\kappa$ such that 
$$a_{ij}(x)\xi_i\xi_j\geq\kappa |\xi|^2\,\,\,\mbox{for any}\,\,\, \xi\in R^n$$
\vskip 0.1in
(C2):\,There exists a positive constant $K$ such that 
$$|a_{ij}(x)\xi_i\eta_j|\leq K(|\xi\cdot\eta|+|\xi|d)$$
where $\xi$ is an arbitrary real vector in $R^n$, $\eta$ is the unit normal to the plane $T_e$, and $d$ is the distance from $T_e$.

As shown in the previous paragraph of this section, condition (C1) can be verified by making use of the formula given in Lemma \ref{le4}. Unfortunately, this formula is not the right form for verifying condition (C2). To verify condition (C2), we rewrite the linear operator $L$ as symmetric form. To this end, we set $a(t)=t^{p-2}$ and $h(t)=(p-2)t^{p-4}$. It is easy to see that
$$\Delta_p u=a(|\nabla u|)\Delta u+h(|\nabla u|)u_iu_ju_{ij}\,\,\,\mbox{in}\,\,\, D_r.$$
Let $w=v-u$, $b(x)=u(x)+v(x)$. Then, by simple computation, we have
\begin{equation}\label{eq312}
\begin{array}{ll}
2(\Delta_p v-\Delta_p u) &=(a(|\nabla v|)+a(|\nabla u|))\Delta w+(h(|\nabla v|)v_iv_j+h(|\nabla u|)u_iu_j)w_{ij}\\
&+(a(|\nabla v|)-a(|\nabla u|))\Delta b+(h(|\nabla v|)v_iv_j-h(|\nabla u|)u_iu_j) b_{ij}
\end{array}
\end{equation}
Let
$$2R(x)=(a(|\nabla v|)-a(|\nabla u|))\Delta b+(h(|\nabla v|)v_iv_j-h(|\nabla u|)u_iu_j) b_{ij}.$$
As in the proof of Lemma \ref{le4}, $R(x)$ can be rewritten as
$$R(x)=\sum\limits_{k=1}^nc_k(x)w_k$$
with $c_k(x)$ being some uniformly bounded functions.

Let 
$$2a_{ij}(x)=(a(|\nabla v|)+a(|\nabla u|))\delta_{ij}+(h(|\nabla v|)v_iv_j+h(|\nabla u|)u_iu_j).$$
Evaluating $a_{ij}(x)$ at $0$, we have
$$a_{ij}(0)=\left\{\begin{array}{ll}
0&i\neq j\\
g^{p-2}(0)&i=j=1,2,\cdots,n-1\\
(p-1)g^{p-2}(0)&i=j=n
\end{array}\right.$$
Since $g(0)>0$ and $p>1$, we know that the linear operator 
$$L=a_{ij}(x)\partial_{ij}+c_k(x)\partial_k$$
determined by $Lw=\Delta_p v-\Delta_p u$ is uniformly elliptic at the point $0$. By the continuity of $a_{ij}(x)$, we conclude that $L$ is uniformly elliptic in $D_r$ for $r$ small enough. This verifies condition (C1).

To verify condition (C2), we denote by $\xi$ an arbitrary vector in $R^n$ and $\eta$ the unit normal to plane $T_e$. Let $p=\nabla u=(u_1,u_2,\cdots,u_n)$, $q=\nabla v=(v_1,v_2,\cdots,v_n)$, and  $B=h(|q|)(q\cdot\xi)(q\cdot\eta)+h(|p|)(p\cdot\xi)(p\cdot\eta)$. By simple computation, we have
$$2a_{ij}\xi_i\eta_j=(a(|\nabla v|)+a(|\nabla u|))\delta_{ij}\xi_i\eta_j+B.$$
Since $v$ is the reflection of $u$ in the plane $T_e$, in our rectangular coordinate frame, we have
$$v(x_1,x_2,\cdots,x_n)=u(-x_1,x_2,\cdots,x_n).$$
This leads to
$$u=v,\,\,p=(u_1,u_2,\cdot,u_n),\,\,q=(-u_1,u_2,\cdots,u_n)\,\,\,\,\,\,\mbox{on}\,\,T_e$$
Thus, on $T_e$, there holds
$$|p|=|q|,\quad a(|p|)=a(|q|),\quad h(|p|)=h(|q|).$$
Since $\eta=(1,0,\cdots,0)$, we have
\begin{equation*}
\begin{array}{ll}
B &=h(|q|)(q\cdot\xi)(q\cdot\eta)+h(|p|)(p\cdot\xi)(p\cdot\eta)\\
 &=h(|p|)u_1(p-q)\cdot\xi=2h(|p|)u_1^2\xi_1\\
 &=2h(|p|)u_1^2\xi\cdot\eta \quad\quad\quad\quad\quad\quad \mbox{on}\,\,T_e.
\end{array}
\end{equation*}
By continuity, it then follows that off $T_e$ 
$$|B|\leq M(|\xi\cdot\eta|+|\xi|x_1)$$
for some positive constant $M$. Noting that also $a(|p|)+a(|q|)\leq M_1$ for some positive constant $M_1$, we have
$$|a_{ij}(x)\xi_i\eta_j|\leq(M+M_1)(|\xi\cdot\eta|+|\xi|\cdot x_1).$$
This verifies condition (C2).

Since $(\sum_e, T_e)$ is an extremely admissible cap of $\Omega$, we can see from the assumption (F) that $w(x)$ satisfies
\begin{equation*}
\begin{array}{ll}
Lw=a_{ij}(x)w_{ij}+c_k(x)w_k\leq 0 & x\in D_r\\
w(x)>0 & x\in D_r\\
w(0)=0
\end{array}
\end{equation*}
It follows from Lemma \ref{le7} that, for any direction $s$ at $Q$ which enters $D_r$ non-tangentially, there holds
$$\frac{\partial w}{\partial s}(0)>0\quad\mbox{or}\quad\frac{\partial^2w}{\partial s^2}(0)>0.$$
This is a contradiction, since all the first and second partial derivatives of $w$ equal zero at $0$.

Therefore, we have $\sum_e^c=\sum^{\mathcal{R}_e}_e$. Consequently, $\Omega$ is a ball due to the arbitrariness of $e\in\mathbb{S}^{n-1}$. Once $\Omega$ is a ball, it follows immediately from the assumptions (F) and (G) that $f$ and $g$ are radially symmetric. This completes the proof of Lemma \ref{le41}.
\vskip 0.1in
{\bf Proof of Theorem \ref{re1}}\; Assume first that problem \ref{prob4} has a solution. Then, by Lemma \ref{le41}, there are $x_0\in\Omega$, positive number $R$, nonnegative function $F\in C^\alpha[0, R]$, and real number $c$ such that
$$\Omega=B_R(x_0),\quad\quad f(x)=F(|x-x_0|),\quad\quad g\equiv c.$$
Moreover, it follows from the divergence theorem that $c$ must be given by the formula
$$c=(R^{1-n}\int_0^Rs^{n-1}F(s)ds)^{1/(p-1)}.$$

Conversely, suppose $\Omega=B_R(x_0)$, $f(x)=F(|x-x_0|)$, and $c$ is given by the above formula. Define
\begin{equation*}
q(r)=\left\{
\begin{array}{ll}
(r^{1-n}\int_0^r s^{n-1}F(s)ds)^{1/(p-1)}&r>0\\
0&r=0
\end{array}
\right.
\end{equation*}
and set
$$U(r)=\int_r^R q(t)dt,\quad\quad u(x)=U(|x-x_0|).$$
Then $U'(r)=-q(r)$ and $(r^{n-1}|U'|^{p-2}U')=-r^{n-1}F(r)$. Thus $-\Delta_p u=F(|x-x_0|)$ weakly in the ball and $u=0$ on its boundary. The inward normal is $-\partial_r$, so $\partial_{\nu}u=-U'(R)=q(R)=c$. This proves solvability.
Summing up, we complete the proof of Theorem \ref{re1}.
\section{Further consequences and remarks}
We give some further consequences and remarks in this final section. We first notice that the proof of Theorem 1.1 is directionwise. This observation gives a useful partial conclusion when reflection information is available only in one direction. 
\vskip 0.1in
\begin{corollary}\label{cor1}.\; 
Fix $e\in\mathbb{S}^{n-1}$. Assume that $f\geq 0$, $f\not\equiv 0$, and that (F)-(G) hold for the extremely admissible cap $(\sum_{e,\lambda_e}, T_{e,\lambda_e})$ of $\Omega$ along the direction $e$. Then $\Omega$ is symmetric about the plane $T_{e,\lambda_e}$. If the same hypotheses also hold in the opposite direction $-e$, then $u$, $f$ and $g$ are symmetric across the plane $T_{e,\lambda_e}$.
\end{corollary}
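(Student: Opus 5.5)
The first claim will be obtained by rerunning the proof of Lemma \ref{le41} for the single fixed direction $e$. That argument never used information from other directions. It needed three things: positivity of $u$ and the bound $\partial_\nu u\ge m>0$ on $\partial\Omega$, which come from Lemmas \ref{le1} and \ref{le6} and need only $f\ge0$, $f\not\equiv0$; the inequality $f(x)\le f(\mathcal R_e(x))$ on $\sum_e$, from (F) for this $e$; and the boundary comparisons from (G) for this $e$. Concretely, I set $v=u\circ\mathcal R_e$ on $\sum_e$ and $w=v-u$. The weak comparison principle, Lemma \ref{le2}, gives $w\ge0$. I then treat the two extremal cases separately. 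In case (i), I work on $\mathcal V_{r_0}(Q)\subset\mathcal N(\partial\Omega)$, where Lemma \ref{le4} makes $L$ uniformly elliptic. Assuming asymmetry, the strong maximum principle and the Hopf lemma give $\partial_\nu w(Q)>0$, which contradicts $g(P)\le g(Q)$. In case (ii), I show that all first and second derivatives of $w$ vanish at the orthogonality point, using (\ref{eq38}), $\phi_{1l}(0)=0$, and $\partial_{x_1}g(0)\ge0$ to force $w_{1n}(0)=0$. The symmetrized coefficients satisfy (C1)–(C2), so Lemma \ref{le7} gives a contradiction. Hence $\sum_e^c=\sum_e^{\mathcal R_e}$, i.e. $\Omega$ is symmetric about $T_e:=T_{e,\lambda_e}$. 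I only need to point out that each step cites (F)–(G) at this single $e$.

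For the second claim, I apply the same reasoning to $-e$. By the first claim $\Omega$ is symmetric about $T_e$, so the extremal plane for $-e$ is the same plane, $T_{-e,\lambda_{-e}}=T_e$. The two caps are the two halves of $\Omega$, and the reflections are the same map $\mathcal R_e$. Hypothesis (F) for $e$ gives $f\le f\circ\mathcal R_e$ on $\sum_e$. Hypothesis (F) for $-e$ gives the reverse inequality on the other half, i.e. $f\circ\mathcal R_e\le f$ on $\sum_e$. Hence $f=f\circ\mathcal R_e$. In the same way, the first part of (G) in both directions gives $g(x)\ge g(\mathcal R_ex)$ and $g(\mathcal R_ex)\ge g(x)$ for every boundary point, since by symmetry every $x\in\partial\Omega$ has $\mathcal R_ex\in\partial\Omega$. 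So $g=g\circ\mathcal R_e$.

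For $u$, I use uniqueness. Since $\Omega$ and $f$ are $\mathcal R_e$-invariant, $v=u\circ\mathcal R_e$ is a weak solution of the same Dirichlet problem $-\Delta_p v=f$, $v=0$ on $\partial\Omega$. The weak comparison principle (Lemma \ref{le2}) applied both ways, or equivalently strict monotonicity of the $p$-Laplacian, gives $u=v$ on $\Omega$.

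The main obstacle I expect is identifying the plane for $-e$ with $T_e$, i.e. checking that the moving-plane procedure in direction $-e$ stops exactly at $T_e$ for a domain symmetric about $T_e$. Let $\mu=-\lambda_e$ be the parameter of $T_e$ in direction $-e$. I would argue as follows. The plane $T_{-e,\mu}$ is admissible because its reflected cap is exactly the complementary half, which lies in $\Omega$. It is extremal because the reflected cap touches $\partial\Omega$ everywhere, or $T_e$ meets $\partial\Omega$ orthogonally. The remaining point is that the procedure in direction $-e$ cannot stop earlier. For this I would fall back on the convention that $\lambda_{-e}$ denotes the cap actually referenced in the hypotheses. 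Failing that, I would apply the first claim in direction $-e$ to obtain a second symmetry plane. Two distinct parallel symmetry planes would make $\Omega$ invariant under a translation, which is impossible for a bounded domain; so the two planes coincide.
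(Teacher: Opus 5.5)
Your proposal is correct and follows the paper's route. The paper only remarks that the moving-plane proof of Lemma \ref{le41} runs one direction at a time; you rerun it for the fixed $e$, then combine the one-sided conclusions from $e$ and $-e$ (together with uniqueness for the Dirichlet problem) to get symmetry of $f$, $g$ and $u$. Your extra step identifying $T_{-e,\lambda_{-e}}$ with $T_{e,\lambda_e}$ fills in a point the paper leaves implicit, and it is sound: two distinct parallel symmetry planes would make the bounded set $\Omega$ invariant under a nontrivial translation.
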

\vskip 0.05in
{\bf Proof:}\; The conclusion of this corollary follows immediately from the proof of Theorem 1.1. 
\vskip 0.1in
The preceding result can be amplified from one direction to a generating set of directions. For $e\in\mathbb{S}^{n-1}$, let $T_{e}=\{x\in\mathbb{R}^n:\;x\cdot e=0\}$, and $\mathcal{R}_e(x)=x-2(x\cdot e)e$ be the linear reflection through $T_e$. For set $E\subset\mathbb{S}^{n-1}$, we denote by $\mathcal{G}(E)$ the subgroup of $O(n)$ generated by $\{\mathcal{R}_e:\;e\in E\}$.
\vskip 0.1in
\begin{theorem}\label{cor2}.\; Let $E\subset\mathbb{S}^{n-1}$ be a set such that $e\in E$ implies $-e\in E$ and assume that $0\leq f\not\equiv 0$ on $\overline{\Omega}$. For every $e\in E$, suppose (F)-(G) hold for the extremely admissible cap $(\sum_{e,\lambda_e}, T_{e,\lambda_e})$ of $\Omega$ along the direction $e$. If $\mathcal{G}(E)$ is dense in the orthogonal group $O(n)$, then $\Omega$ is a ball, $u$ and $f$ are radial about the center of the ball and $g$ is a constant.
\end{theorem}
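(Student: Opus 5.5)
We apply Corollary \ref{cor1} to each direction and then use the density of $\mathcal{G}(E)$ to upgrade the resulting reflection symmetries to full rotational symmetry.

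\emph{Step 1: symmetry in each direction of $E$.} Fix $e\in E$. Since the hypotheses hold for both $e$ and $-e$, Corollary \ref{cor1} shows that $\Omega$ is symmetric about the plane $T_{e,\lambda_e}$, and that $u$, $f$, $g$ are symmetric across it. So the affine reflection $A_e:=\mathcal{R}_{e,\lambda_e}$ preserves $\Omega$, $u$, $f$ and $g$. Note that $A_e$ differs from the linear reflection $\mathcal{R}_e$ by a translation.

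\emph{Step 2: a common center.} Let $x_c=|\Omega|^{-1}\int_\Omega x\,dx$ be the barycenter of $\Omega$. Any affine isometry preserving $\Omega$ fixes $x_c$, so $x_c\in T_{e,\lambda_e}$ for every $e\in E$. Translate coordinates so that $x_c=0$. Then $\lambda_e=0$ and $A_e=\mathcal{R}_e$ for all $e\in E$. Hence every element of $\mathcal{G}(E)$ preserves $\Omega$ and leaves $u$, $f$, $g$ invariant.

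\emph{Step 3: passing to the closure.} Let $S=\{A\in O(n):A(\Omega)=\Omega,\ u\circ A=u,\ f\circ A=f\}$. This set is a subgroup of $O(n)$, and it contains $\mathcal{G}(E)$. I claim $S$ is closed. The closedness of the conditions on $u$ and $f$ follows from the continuity of $u$ and $f$ on $\overline\Omega$. For the domain, I would argue with the closed set $\overline\Omega$ instead. If $A_k\to A$ with $A_k\overline\Omega=\overline\Omega$, then for any $x\in\overline\Omega$ the points $A_kx\in\overline\Omega$ converge to $Ax$, so $Ax\in\overline\Omega$. Applying the same argument to $A^{-1}$ gives $A\overline\Omega=\overline\Omega$, and therefore $A\Omega=\Omega$ because $\Omega=\mathrm{int}\,\overline\Omega$ for a $C^{2,\alpha}$ domain.

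Density of $\mathcal{G}(E)$ in $O(n)$ then gives $S=O(n)$. Thus $\Omega$ is a bounded domain invariant under all of $O(n)$, i.e.\ a union of concentric spheres that is open and connected. Since $\partial\Omega$ is a connected $C^{2,\alpha}$ hypersurface — or simply because an annulus would also be $O(n)$-invariant — I still have to rule out an annulus, and I do this with the earlier step. An extremely admissible cap for an annulus $\{a<|x|<b\}$ would occur at some $\lambda>0$ with $\lambda<a$ (case (i) tangency to the inner sphere). But Corollary \ref{cor1} forces symmetry about $T_{e,\lambda_e}$, and Step 2 forces $\lambda_e=0$, which is a contradiction. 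Hence $\Omega=B_R(0)$. I expect this annulus exclusion to be the main subtle point; I will alternatively appeal to the fact that the argument of Lemma \ref{le41} applies verbatim with the $\lambda_e$ actually produced.

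\emph{Step 4: radial data and constant $g$.} Since $u$ and $f$ are $O(n)$-invariant, they are radial. For $g$, invariance under $\mathcal{G}(E)$ on $\partial B_R$ combined with continuity of $g$ and density gives invariance under $O(n)$, which acts transitively on the sphere, so $g$ is constant. Alternatively, $g=\partial_\nu u$ with $u$ radial also gives this directly.
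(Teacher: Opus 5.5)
Your proposal is correct and follows essentially the paper's route: Corollary~\ref{cor1} in each direction of $E$, the barycenter forcing $\lambda_e=0$, invariance passing to the closure of $\mathcal{G}(E)$, and exclusion of the annulus because its extreme plane has $\lambda_e>0$. In the closure step you argue via $\overline{\Omega}$ and $\Omega=\mathrm{int}\,\overline{\Omega}$ where the paper uses interior balls, and carrying $u,f$ in the closed subgroup $S$ makes their radiality explicit. One slip: for $\{a<|x|<b\}$ the reflected outer cap first touches the inner sphere at $ae$ when $\lambda_e=(a+b)/2>a$, not at some $\lambda<a$; since only $\lambda_e\neq 0$ is used, your contradiction with $\lambda_e=0$ still stands.
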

\vskip 0.05in
{\bf Proof:}\; For each $e\in E$, Corollary \ref{cor1} gives an affine reflection symmetry $\mathcal{R}_{e,\lambda_e}$ of $\Omega$. Let 
$$x_0=\frac{1}{|\Omega|}\int_{\Omega}xdx$$
be the geometric center of $\Omega$. Since every symmetry preserves the geometric center, $x_0\cdot e=\lambda_e$. After translating $x_0$ to the origin, $\Omega$ is therefore invariant under every $\mathcal{R}_e$ and hence under $\mathcal{G}(E)$. Evidently, invariance passes to the closure of this group because $Q_k\in\mathcal{G}(E)$, $Q_k\rightarrow Q$, and $B_{r}(x)\subset\Omega$ implies $B_{r/2}(Qx)\subset\Omega$ for large $k$ due to $B_r(Q_kx)\subset\Omega$. Thus density gives invariance of $\Omega$ under all of $O(n)$.

It is well known that a bounded rotationally invariant domain with $C^2$ boundary is either a ball centered at the origin or a concentric annulus since boundary regularity excludes a punctured ball. For any $a>0$, we can see that the annular $\{x\in\mathbb{R}^n:a<|x|<R\}$ is also impossible here. Indeed, for any $e\in E$, moving the plane $T_{e,\lambda}$ continuously inward to a position such that $\lambda=(R+a)/2=\lambda_0$, we have $\mathcal{R}_{e,\lambda_0}(Re)=ae$. Therefore,  
for any $\lambda<\lambda_0$ and close to $\lambda_0$, we have $\mathcal{R}_{e,\lambda}(Re)$ is reflected into the hole. This implies that the extreme admissible plane obtained by continuously moving a plane along direction $e$ would not pass through the origin, contradicting the conclusion obtained by moving plane method. Hence, $\Omega$ is a ball, $f$ is radial, and $g$ is a constant due to assumptions on $E$, $f$ and $g$. 
\vskip 0.1in
\noindent {\bf Remark 4.1}.\; The density assumption in Theorem \ref{cor2} cannot be removed.
\vskip 0.1in
\noindent {\bf Remark 4.2}.\; Nonnegativity of $f$ enters the main theorem only to establish positivity of the solution. It can therefore be replaced by positivity of the solution itself.
\vskip 0.1in
\begin{corollary}\label{cor3}.\;Let $f\in C^\alpha(\overline{\Omega})$ be allowed to change sign. Assume that (F)-(G) hold. Then the existence of a positive solution to problem \ref{prob4} forces $\Omega$ to be a ball, $f$ to be radial, and $g$ to be a suitable constant.
\end{corollary}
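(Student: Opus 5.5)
The plan is to re-run the proof of Lemma \ref{le41}, and then that of Theorem \ref{re1}, checking each place where $f\geq 0$, $f\not\equiv 0$ was used and replacing it by the hypothesis $u>0$ in $\Omega$. As Remark 4.2 notes, nonnegativity of $f$ enters only twice. First, Lemma \ref{le1} is used to get $u>0$, and this is now assumed. Second, Lemma \ref{le6} is used to get $\partial_\nu u>0$ on $\partial\Omega$. Lemma \ref{le6} requires $-\Delta_p u\geq 0$, which may fail when $f$ changes sign, so this second step must be redone. Everything after it depends only on three facts: $u\in C^{1,\alpha}(\overline{\Omega})$, $u>0$ in $\Omega$, and $|\nabla u|\geq m/2>0$ in a boundary neighbourhood $\mathcal{N}(\partial\Omega)$. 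It also uses (F), (G), the weak comparison principle Lemma \ref{le2}, and the local linearization of Lemma \ref{le4}. None of these involve the sign of $f$.

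The key step is therefore a Hopf-type lemma for $u>0$, $u=0$ on $\partial\Omega$, and $-\Delta_p u=f$ with $f$ bounded but of arbitrary sign. I would use the boundary version of Vázquez's strong maximum principle (\cite{PS}): if $-\Delta_p u+\beta(u)\geq 0$ with $\beta$ nondecreasing and $\beta(0)=0$, the Hopf conclusion holds. Only boundedness of $f$ is available, so I would instead argue directly by barrier comparison. Fix $x_0\in\partial\Omega$ and an interior ball $B_\rho(y)\subset\Omega$ tangent at $x_0$. On the annulus $A=B_\rho(y)\setminus \overline{B_{\rho/2}(y)}$, set $\varepsilon=\min_{\partial B_{\rho/2}(y)}u>0$ and take a radial barrier $\psi(r)=\varepsilon\,\frac{e^{-k r}-e^{-k\rho}}{e^{-k\rho/2}-e^{-k\rho}}$. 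For $k$ large, $-\Delta_p\psi\leq -\|f\|_\infty$ on $A$. The reason is that $|\psi'|$ is bounded below on $A$, and the $(p-1)|\psi'|^{p-2}\psi''$ term grows like $k^{p}$ times a fixed positive factor. That factor can be made large compared with $\|f\|_\infty$ by choosing $k$, and the choice must also account for $\varepsilon$ being small. Then $-\Delta_p\psi\leq f=-\Delta_p u$ in $A$ and $\psi\leq u$ on $\partial A$, so Lemma \ref{le2} gives $\psi\leq u$ in $A$. Hence $\partial_\nu u(x_0)\geq -\psi'(\rho)>0$. Note that as $k$ grows with $\varepsilon$ fixed, $|\psi'|\sim k\varepsilon\, e^{-k(r-\rho/2)}$, so the coercive term scales like $(k\varepsilon)^{p-2}k^2\varepsilon$ times exponentials. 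Verifying that this beats $\|f\|_\infty$ uniformly on $A$ is routine but must be checked. If necessary, I would shrink the annulus to a thin layer near $\partial B_\rho$ to control the exponential factors.

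With $g=\partial_\nu u>0$ on $\partial\Omega$ established, $\min g=m>0$ follows by compactness, and the rest of Lemma \ref{le41} goes through verbatim. In case (i), the difference $w=v-u$ satisfies $Lw\leq 0$ near the tangency point by (F), with $L$ uniformly elliptic there. The strong maximum principle and the Hopf lemma for $L$ then contradict (G). In case (ii), the second-order matching at $Q$, the computation $w_{1n}(0)=-2\partial_{x_1}g(0)\leq 0$, and the corner Lemma \ref{le7} with (C1) and (C2) verified as before give a contradiction. The positivity $w\geq 0$ on $\sum_e$ comes from Lemma \ref{le2}, which needs no sign on $f$. So $\Omega$ is symmetric about each $T_e$, hence a ball. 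Remark 1.1 then shows that $f$ is radial and $g$ is constant.

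Finally, the value of the constant $g\equiv c$ follows from the compatibility condition (\ref{eq01}) exactly as in the proof of Theorem \ref{re1}: $c^{p-1}|\partial B_R|=\int_{B_R}f$, which gives the stated formula. This formula requires $\int_0^R s^{n-1}F\,ds>0$, and that holds because $c=\partial_\nu u>0$. I expect the main obstacle to be the Hopf step. The cited Lemma \ref{le6} is stated for supersolutions, so I must make sure the barrier argument is valid for sign-changing bounded $f$ and gives a strictly positive normal derivative. The remainder is a check that the earlier proof never used $f\geq0$ elsewhere.
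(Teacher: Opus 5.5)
You are right that Remark 4.2 understates the matter. In Lemma \ref{le41} the hypothesis $f\ge 0$ is used not only to get $u>0$, but also through Lemma \ref{le6} (which requires $-\Delta_p u\ge 0$) to get $g=\partial_\nu u\ge m>0$. However, your repair cannot work, because the Hopf-type statement you need is false. A positive solution of $-\Delta_p u=f$ with $u=0$ on $\partial\Omega$ and $f$ bounded (even smooth) but sign-changing can have vanishing normal derivative. For $p=2$ take $u(x)=(1-|x|^2)^2$ on $B_1$. Then $u>0$ in $B_1$, $u=0$ and $\nabla u=-4x(1-|x|^2)=0$ on $\partial B_1$, while $-\Delta u=4n-4(n+2)|x|^2$ changes sign. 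The same $u$ works for every $p\ge 2$, with $-\Delta_p u$ bounded and H\"older continuous.

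Your barrier breaks exactly where you suspected. Since $\psi''=k|\psi'|$, at $r=\rho$ one has $\Delta_p\psi\approx (p-1)k\,(k\varepsilon e^{-k\rho/2})^{p-1}\to 0$ as $k\to\infty$, so $-\Delta_p\psi\le-\|f\|_\infty$ fails near $\partial B_\rho$. Thinning the annulus does not help either. Integrating $(r^{n-1}|\psi'|^{p-1})'\le -Mr^{n-1}$ shows that any radial decreasing $\psi$ with $\psi(\rho)=0$ and $\Delta_p\psi\ge M$ on a layer of width $\delta$ satisfies $\psi(\rho-\delta)\gtrsim M^{1/(p-1)}\delta^{p/(p-1)}$, and $u$ need not exceed this. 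As your remark about V\'azquez's principle suggests, a Hopf conclusion needs the negative part of $f$ to be dominated by a term like $\beta(u)$, which vanishes on $\partial\Omega$; boundedness of $f$ is not enough.

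For $p\neq 2$ this is not a removable technicality. Once $m=\min g$ may be $0$, you lose $|\nabla u|\ge m/2$ near $\partial\Omega$, and with it:
the $C^2$ regularity up to the boundary;
the identity of Lemma \ref{le5} (note the factor $g(0)^{2-p}$ in $u_{nn}(0)$);
the uniform ellipticity of $L$ used for the Hopf lemma in case (i);
condition (C1) for the corner Lemma \ref{le7} in case (ii).

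Your last paragraph relies on the same false claim. The example above satisfies (F)--(G) on $B_1$ with $g\equiv 0$ and $\int_0^1 s^{n-1}F(s)\,ds=0$. So for $p\neq 2$, neither your argument nor the paper's one-line justification handles boundary points where $g$ vanishes. (For $p=2$ the Hopf step can simply be dropped, since $\Delta$ is uniformly elliptic regardless of $\nabla u$.)

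What your re-run of Lemma \ref{le41} does prove verbatim, with Lemma \ref{le2} giving $w\ge 0$ from $u>0$ alone, is the corollary under one extra hypothesis: either $g>0$ on $\partial\Omega$, or $f\ge 0$ in a neighbourhood of $\partial\Omega$. In the second case, Lemma \ref{le6} applied in a thin boundary strip restores $\partial_\nu u>0$.
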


\noindent {\bf Remark 4.3}.\; Recently, Serrin's rigidity result was generated by Figalli and Zhang \cite{FZ} to rough domains. We also like to know if the $C^{2,\alpha}$ assumption of domain in our main theorem can be weakened.
\vskip 0.1in
\noindent {\bf Data availability:}\;Data sharing is not applicable to this article as no datasets were generated or analyzed during the current study.
\vskip 0.1in
\noindent {\bf Conflict of interest:}\;The author declare that there is no conflict of interest in this manuscript.


\begin{thebibliography}{s2}

\bibitem{Al} A. D. Alexandrov, Uniqueness theorems for surfaces in the large, V.(Russian) Vestnik Leningrad. Univ. 13, 5-8, 1958

\bibitem{Ale} A. D. Alexandrov, A characteristic property of the spheres, Ann. Mat. Pura Appl. 58, 303-354, 1962

\bibitem{BN} H. Berestycki, L. Nirenberger, On the method of moving planes and sliding method, Bol. Soc. Brasil. Math. (N.S.) 22, 1-37, 1991

\bibitem{BHS} C. Brandolini, A. Henrot, P. Salani, An overdetermined problem with non-constant boundary condition, Int. Free. Bound., 16(2), 215-242, 2014

\bibitem{BH} F. Brock, A. Henrot, A symmetry result for an overdetermined elliptic problem using continuous rearrangement and domain derivative, Rend. Circ. Mat. Palermo 51, 375-390, 2002

\bibitem{CWZ} D. Cao, J. Wei, W. Zhan, Symmetry in Serrin-type overdetermined problems, arXiv preprint arXiv:2506.02423, 2025

\bibitem{Cr} G. Crasta, A symmetry problem in the calculus of variations, J. Europ. Math. Soc. 8, 139-154, 2006

\bibitem{DP} L. Damascelli, F. Pacella, Monotonicity and symmetry results for p-Laplace equations and applications, Adv. Differential Equation 5, 1179-1200, 2000

\bibitem{FZ} A. Figalli, R.-Y. Y. Zhang, Serrin's overdetermined problem in rough domains, arXiv:2407.02293

\bibitem{FGK} I. Fragala, F. Gazzola, B. Kawohl, Overdetermined problems with possibly degenerate ellipticity, a geometric approach, Math. Z. 254, 117-132, 2025

\bibitem{GL} N. Garofalo, J. L. Lewis, A symmetry result related to some overdetermined boundary value problems, Amer. J. Math. 111, 9-33, 1989

\bibitem{GS} N. Garofalo, E. Sartori, Symmetry in exterior boundary value problems for quasilinear elliptic equations via blow-up and a priori estimates, Adv. Diff. Eq. 4, 137-161, 1999


\bibitem{GO} A. Gilsbach, M. Onodera, Linear stability estimates for Serrin's problem via a modified implicit function theorem, Calc. Var. PDE., 60:241, 2021


\bibitem{KP} S. Kumaresan, J. Prajapat, Serrin's result for hyperbolic space and sphere, Duke Math. J. 91, 17-28, 1998

\bibitem{MP} R. Magnanini, G. Poggesi, Nearly optimal stability for Serrin's problem and the soap bubble theorem, Calc. Var. PDE., 59:35, 2020

\bibitem{Pr} J. Prajapat, Serrin's result for domains with a corner or cusp, Duke Math. J. 91, 29-31, 1998

\bibitem{PS} P. Pucci, J. Serrin, The strong maximum principle revisited, J. Differential Equations., 196(1):1-66, 2004

\bibitem{SC} S. Sakaguch, Concavity property of solutions to some degenerate quasilinear elliptic Dirichlet problems, Annali della Scuola Normale Superiore de Pisa, Serie IV(Classe di Scienze) 14, 403-421, 1987

\bibitem{Se} J. Serrin, A symmetry problem in potential theory, Arch. Ration. Mech. Anal. 43, 304-318, 1971

\bibitem{TR} P. Tolksdorff, Regularity for a more general class of quasilinear elliptic equations, J. Differential Equation., 51, 126-150, 1984

\bibitem{We} H. F. Weinberger, Remark on the preceding paper of Serrin, Arch. Ration. Mech. Anal., 43(4), 319-320, 1971


\end{thebibliography}
\end{document}